\def \ra {\rightarrow}
\def \E {\mathbb{E}}
\def \a {\alpha}
\def \be {\beta}
\newtheorem{example}{\bf Example}
\newtheorem{exm}[example]{\bf Example}
\newtheorem{definition}{\bf Definition}
\newtheorem{defn}[definition]{\bf Definition}
	\newtheorem{theorem}{\bf Theorem}
	\newtheorem{prop}{\bf Proposition}
	\newtheorem{lem}[theorem]{\bf Lemma}
	\newtheorem{as}{\bf Assumption}
	\newtheorem{cor}[theorem]{\bf Corollary}
	\newtheorem{coro}[theorem]{\bf Corollary}
\renewcommand{\epsilon}{\varepsilon}
\begin{document}

\begin{frontmatter}

\title{Scoring a Goal optimally in a Soccer game under Liouville-like quantum gravity action}
\runtitle{Soccer game}

\begin{aug}

\author{\fnms{Paramahansa} 
	\snm{Pramanik}
	\ead[label=e1]{ppramanik@southalabama.edu}}
\and
\author{\fnms{Alan M.} 
	\snm{Polansky}
	\ead[label=e2]{polansky@niu.edu}}
	
\runauthor{P. Pramanik and A. M. Polansky}

\affiliation{Northern Illinois University}

\address{Department of Mathematics and Statistics, University of South Alabama\\Department of Statistics and Actuarial Science, Northern Illinois University }
\end{aug}
  
\begin{abstract}
In this paper we present a new theoretical model to find out an optimal weight associated with a soccer player under the presence of a stochastic goal dynamics by using Feynman path integral method, where the action of a player is on $\sqrt{8/3}$-Liouville Quantum Gravity surface. Before determine the optimal weight we first establish an Infinitary logic which can deal with infinite variables on the strategy space then, a quantum formula of this logic has been developed and finally, based on this we are able show the existence of a Lefschetz-Hopf fixed point of this game. As in a competitive tournament, all possible standard strategies to score goals are known to the opposition team, a player's action is stochastic in nature and they would have some comparative advantage to score goals. Furthermore, conditions like uncertainties due to rain, dribbling and passing skill of a player, whether the match is a day match or a day-night match, advantages of having home crowd and asymmetric information of action profiles have been considered on the way to determine the optimal weight. 
\end{abstract}

\begin{keyword}[class=MSC]
\kwd[Primary ]{60H05}
\kwd[; Secondary ]{81Q30}
\end{keyword}

\begin{keyword}
\kwd{Soccer game}
\kwd{Infinitary Logic and quantum formula}
\kwd{stochastic goal dynamics}
\kwd{Lefschetz-Hopf fixed point theorem}
\kwd{Feynman path integral}
\end{keyword}

\end{frontmatter}

\section{Introduction}
Soccer is one of the most popular sports of the world because of its simplistic rules. Three major tournaments of this game are The World Cup, The Euro Cup and The Copa America out of them The World Cup is the most popular tournament. Following \cite{santos2014} we know, after early $1990$s F\'ed\'eration Internationale de Football Association (FIFA) had been worried about that all the teams become defensive in terms of scoring goals. As a result, total number of goals have been falling which eventually leads to a fall in interest on this game. If we  consider $2006$ men's soccer World champion Italy and $2010$ champion Spain, they only allowed two goals in their entire seven matches in the tournament \citep{santos2014}. On March $17$, $1994$ in USA Today, FIFA clearly specified its objective as encourage attacking and high scoring match. Since then we can see some changes in objectives of some teams like the $2014$ World Cup semifinal between Brazil and Germany where Germany scores five goals in the first half. Furthermore, in $2018$ men's soccer World cup we see France chose some offensive strategies. Furthermore, if we look at the French national team of $2018$ men's World cup, we see there are no big names (like Lionel Messi, Cristiano Ronaldo, Romelu Lukaku etc.) and their average age is lower compare to other big teams. Therefore, all the players played without any pressure to win the World Cup and, if some team has a big name, that means that player is in the game for a long time and opposition teams have strategies to stop him to score goals. 

In this paper we present a new theoretical approach to find out an optimal weight associated with a soccer player under the presence of a stochastic goal dynamics by using Feynman path integral method, where the actions of every player of both the teams are on $\sqrt{8/3}$-Liouville Quantum Gravity (LQG) surface \citep{feynman1949,fujiwara2017,pramanik2020,pramanik2020opt,pramanik2021consensus,pramanik2021effects,pramanik2021s}. This surface is continuous but not differentiable. Furthermore, at $\sqrt{8/3}$ this behaves like a Brownian surface \citep{miller2015,miller2016,hua2019,pramanik2019,pramanik2020motivation,polansky2021,pramanik2021,pramanik2021e}. Furthermore, this approach can be used to obtain a solution for stability of an economy after pandemic crisis \citep{ahamed2021}, determine an optimal bank profitability \citep{islamrole,alamanalyzing,mohammad2010,alam2013,hossain2015,pramanik2016,alam2018,ahamed2021d,alam2021,alam2021output}. This approach can be used in population dynamics problems such as \cite{minar2018}, \cite{minar2019}, \cite{minar2021} and \cite{minarrevisiting}. In a very competitive tournament like a soccer World Cup, all possible standard strategies to score  goals are known to the opposition teams. In this environment if a player's action is stochastic in nature then he would have some comparative advantage which is also known to the opposition team but, they do not know what type stochastic action is going to take place and his complete stochastic action profile is unknown to the opposition team. Apart from that, the conditions like uncertainties due to rain, dribbling and passing skill of a player, type of match (i.e. day match or a day-night match), and advantages of having home crowd have been considered as stochastic component of the goal dynamics on the way to determine the optimal weight. 

Recent literature talks about whether a soccer team should choose offensive or defensive strategies \citep{santos2014}. Some studies say that, relatively new ``Three point" rule and ``Golden goal" do not necessarily create to brake a tie and score goals \citep{brocas2004, santos2014} and if asymmetry between two teams is big then these two rules induce the weaker team to play more defensively \citep{guedes2002}. There are some other studies  combined them give mixed results on these two rules \citep{dilger2009,garicano2005, moschini2010, santos2014}. Therefore, we do not include these two rules in our analysis. On the other hand, as scoring a goal on a given condition of a match is purely stochastic, a discounted reward on a player's dynamic objective function can give them more incentive to score which is a common dynamic reward phenomenon in the animal kingdom \citep{kappen2007}. 

\section{Construction of the problem}
In this section we construct a forward stochastic goal dynamics under a Liouville-like quantum gravity action space with a conditional expected dynamic objective function. The objective function gives an expected number of goals of a match based on total number of goals scored by a team at the beginning of each time interval. For example at the beginning of a match both of the teams start with $0$ goals. Therefore, the initial condition is $Z_0=0$, where $Z_0$ represents total number of goals scored by a team at time $0$ of the interval $[0,t]$. The objective of player $i\in I$ at the beginning of $(M+1)^{th}$ game is :
\begin{multline}\label{f0}
\mathbf {OB}_\a^i:\overline{\mathbf Z}_\a^i(\mathbf W,s)\\=h_0^{i*}+\max_{W_i\in W} \E_0\left\{\int_{0}^t\sum_{i=1}^I\sum_{m=1}^M\exp(-\rho_s^im)\a^iW_i(s)h_0^i[s,w(s),z(s)]\bigg|\mathcal F_0^Z\right\}ds,
\end{multline}
where $W_i$ is the strategy of player $i$ (control variable), $\a^i\in\mathbb R$ is constant weight, $I$ is total number of players in a team including those at the reserve bench, $\rho_s^i\in(0,1)$ is a stochastic discount rate for player $i$ with $w\in\mathbb R_+^{I'}$ and $z\in\mathbb R_+^{I'}$ are time $s\geq0$ dependent all possible controls and goals available to them, $h_0^{i*}\geq 0$ is the initial condition of the function $h_0^i$ and $\mathcal F_0^Z$ is the filtration process of goal dynamics starting at the beginning of the game. Therefore, for player $i$ the difference between  $W_i$ and $w$ is that strategy $W_i$ is the subset of all possible strategies $w$ available to them at time $s$ before $(M+1)^{th}$ game starts. Furthermore, we assume $h_0^i$ is a known objective function to player $i$, which is  partly unknown to the opposition team because of incomplete and imperfect information of them.

Suppose, the stochastic differential equation corresponding to goal dynamics is 
\begin{equation}\label{f1}
d\mathbf Z(s,\mathbf W)=\bm\mu[s,\mathbf W(s),\mathbf{Z}(s,\mathbf W)]ds+\bm\sigma[s,\hat{\bm{\sigma}},\mathbf W(s),\mathbf{Z}(s,\mathbf W)]d\mathbf B(s),
\end{equation}
where $\mathbf W_{I\times I'}(s)\subseteq\mathcal W\subset\mathbb R_+^{I\times I'}$ is the control space and $\mathbf Z_{I\times I'}(s)\subseteq\mathcal Z\subset\mathbb R_+^{I\times I'}$ is space of scoring goals under the soccer rules such that $z\in\mathbf Z$, $\mathbf B_{p\times 1}(s)$ is a $p$-dimensional Brownian motion, $\bm\mu_{I\times 1}>0$ is the drift coefficient and the positive semidefinite matrix $\bm\sigma_{I\times p}\geq 0$ is the diffusion coefficient such that 
\[
\lim_{s\downarrow \infty} \E\mu[s,\mathbf W(s),\mathbf{Z}(s,\mathbf W)]=\mathbf Z^*\geq0.
\]
 Above argument states that, if enough time is allowed for a match then, $\mathbf Z^*$ number of goals would be achieved which turns out to be a stable solution of this system. Finally,
 \begin{equation}\label{f2}
 \bm\sigma[s,\hat{\bm{\sigma}},\mathbf W(s),\mathbf{Z}(s,\mathbf W)]=\gamma\hat{\bm{\sigma}}+\bm\sigma^*[s,\mathbf W(s),\mathbf{Z}(s,\mathbf W)],
 \end{equation}
 where $\hat{\bm\sigma}>0$ comes from the strategies of the opposition team with the coefficient $\gamma>0$ and $\bm\sigma^*>0$ comes from the weather conditions, venues, popularity of a club or a team before starting of $(M+1)^{th}$ game. The forward stochastic differential Equation (\ref{f2}) is the core of our analysis. We use all possible important conditions during a game.
 
 \section{Definitions and Assumptions}
 
 \begin{as}\label{asf0}
 	For $t>0$, let ${\bm{\mu}}(s,\mathbf{W},\mathbf{Z}):[0,t]\times \mathbb{R}^{I\times I'}\times \mathbb{R}^{I\times I'} \ra\mathbb{R}^{I\times I'}$ and $\bm{\sigma}(s,\hat{\bm{\sigma}},\mathbf{W},\mathbf{Z}):[0,t]\times \mathbb{S}^{(I\times I')\times t}\times \mathbb{R}^{I\times I'}\times \mathbb{R}^{I\times I'} \ra\mathbb{R}^{I\times I'}$ be some measurable function with $(I\times I')\times t$-dimensional two-sphere $\mathbb{S}^{(I\times I')\times t}$ and, for some positive constant $K_1$, $\mathbf{W}\in\mathbb{R}^{I\times I'}$ and, $\mathbf{Z}\in\mathbb{R}^{I\times I'}$ we have linear growth as
 	\[
 	|{\bm{\mu}}(s,\mathbf{W},\mathbf{Z})|+
 	|\bm{\sigma}(s,\hat{\bm{\sigma}},\mathbf{W},\mathbf{Z})|\leq 
 	K_1(1+|\mathbf{Z}|),
 	\]
 	such that, there exists another positive, finite, constant $K_2$ and for a different score vector 
 	$\widetilde{\mathbf{Z}}_{(I\times I')\times 1}$ such that the Lipschitz condition,
 	\[
 	|{\bm{\mu}}(s,\mathbf{W},\mathbf{Z})-
 	{\bm{\mu}}(s,\mathbf{W},\widetilde{\mathbf{Z}})|+|\bm{\sigma}(s,\hat{\bm{\sigma}},
 	\mathbf{W},\mathbf{Z})-\bm{\sigma}(s,\hat{\bm{\sigma}},\mathbf{W},\widetilde{\mathbf{Z}})|
 	\leq K_2\ |\mathbf{Z}-\widetilde{\mathbf{Z}}|,\notag
 	\]
 	$ \widetilde{\mathbf{Z}}\in\mathbb{R}^{I\times I'}$ is satisfied and
 	\[
 	|{\bm{\mu}}(s,\mathbf{W},\mathbf{Z})|^2+
 	\|\bm{\sigma}(s,\hat{\bm{\sigma}},\mathbf{W},\mathbf{Z})\|^2\leq K_2^2
 	(1+|\widetilde{\mathbf{Z}}|^2),
 	\]
 	where 
 	$\|\bm{\sigma}(s,\hat{\bm{\sigma}},\mathbf{W},\mathbf{Z})\|^2=
 	\sum_{i=1}^I \sum_{j=1}^I|{\sigma^{ij}}(s,\hat{\bm{\sigma}},\mathbf{W},\mathbf{Z})|^2$.
 \end{as}
 
 \begin{as}\label{asf1}
 	There exists a probability space $(\Omega,\mathcal{F}_s^{\mathbf Z},\mathcal{P})$ with sample space $\Omega$, filtration at time $s$ of goal ${\mathbf{Z}}$ as $\{\mathcal{F}_s^{\mathbf{Z}}\}\subset\mathcal{F}_s$, a probability measure $\mathcal{P}$ and a $p$-dimensional $\{\mathcal{F}_s\}$ Brownian motion $\mathbf{B}$ where the measure of valuation of players $\mathbf{W}$ is an $\{\mathcal{F}_s^{\mathbf{Z}}\}$ adapted process such that Assumption \ref{asf0} holds, for the feedback control measure of players there exists a measurable function $h$ such that $h:[0,t]\times C([0,t]):\mathbb{R}^{I\times I'}\ra\mathbf{W}$ for which $\mathbf{W}(s)=h[\mathbf{Z}(s,w)]$ such that Equation (\ref{f1})	has a strong unique solution \citep{ross2008}.
 \end{as}

\begin{as}\label{asf3}
(i). $\mathcal Z\subset\mathbb R^{I\times I'}$ such that a soccer player $i$ cannot go beyond set $\mathcal Z_i\subset \mathcal Z$ because of their limitations of skills.  This immediately implies set $\mathcal Z_i$ is different for different players.\\
(ii). The function $h_0^i:[0,t]\times\mathbb R^{2I'}\ra\mathbb R^{I'}$. Therefore, all players in a team at the beginning of $(M+1)^{th}$ match have the objective function $h_0:[0,t]\times\mathbb R^{I\times I'}\times \mathbb R^{I\times I'}\ra\mathbb R^{I\times I'}$ such that $h_0^i\subset h_0$ in functional spaces and both of them are concave which is equivalent to Slater condition \citep{marcet2019}.\\
(iii). There exists an $\bm\epsilon>0$ such that for all $(\mathbf W,\mathbf Z)$ and $i=1,2,...,I$ such that 
\[
\E_0\left\{\int_{0}^t\sum_{i=1}^I\sum_{m=1}^M\exp(-\rho_s^im)\a^iW_i(s)h_0^i[s,w(s),z(s)]\bigg|\mathcal F_0^Z\right\}ds\geq\bm\epsilon.
\]
\end{as}

\begin{definition}\label{def0}
	Suppose $\mathbf{Z}(s,\mathbf W)$ is a non-homogeneous Fellerian semigroup on time in $\mathbb{R}^{I\times I'}$. The infinitesimal generator $A$ of $\mathbf{Z}(s,\mathbf W)$ is defined by,
	\[
	Ah(z)=\lim_{s\downarrow 0}\frac{\E_s[h(\mathbf{Z}(s,\mathbf W))]-h(\mathbf Z(\mathbf W))}{s},
	\]
	for $\mathbf Z\in\mathbb{R}_+^{I\times I'}$	where $h:\mathbb{R}_+^{I\times I'}\ra\mathbb{R}_+$ is a $C_0^2(\mathbb{R}_+^{I\times I'})$ function, $\mathbf{Z}$ has a compact support, and at $\mathbf Z(\mathbf W)>\bm 0$ the limit exists where $\E_s$ represents the soccer team's conditional expectation of scoring goals $\mathbf{Z}$ at time $s$. Furthermore, if the above Fellerian semigroup is homogeneous on times, then $Ah$ is the Laplace operator.
\end{definition}

\begin{defn}\label{def1}
	For a Fellerian semigroup $\mathbf{Z}(s,\mathbf W)$ for all $\epsilon>0$, the time interval $[s,s+\epsilon]$ with $\epsilon\downarrow 0$, define a characteristic-like quantum operator starting at time $s$ is defined as 
	\[
	\mathcal{A} h(\mathbf Z)=\lim_{\epsilon\downarrow 0}
	\frac{\log\E_s[\epsilon^2\ h(\mathbf{Z}(s,\mathbf W))]-\log[\epsilon^2h(\mathbf Z(\mathbf W))]}{\log\E_s(\epsilon^2)},
	\]
	for $\mathbf Z\in\mathbb{R}_+^{I\times I'}$,	where $h:\mathbb{R}^{I\times I'}\ra\mathbb{R}$ is a $C_0^2\left(\mathbb{R}_+^{I\times I'}\right)$ function, $\E_s$ represents the conditional expectation of goal dynamics $\mathbf{Z}$ at time $s$,  for $\epsilon>0$ and a fixed $h$ we have the sets of all open balls of the form $B_\epsilon(h)$ contained in $\mathcal{B}$ (set of all open balls) and as $\epsilon\downarrow 0$ then $\log\E_s(\epsilon^2)\ra\infty$.
\end{defn}

\begin{defn}\label{def2}
Following \cite{frick2019} a dynamic conditional expected objective function explained in Equation (\ref{f0}) on the goal dynamics $\mathbf Z\in\{\mathbf Z_0,\mathbf Z_1,...,\mathbf Z_t\}$ is a tuple $\left(s,\bm\a^i,\{\mathbf{OB}_\a^i(W_i),\bm\tau_{W_i}\}_{W_i\in w}\right)$ where\\
(i). $w$ is a finite strategy space where player $i$ can choose strategy $W_i$ and $\bm\a^i$ is all probabilities available to them from where they can choose $\a^i$.\\
(ii). For each strategy $W_i\in w$, $\mathbf{OB}_\a^i\in\mathbb R^{\mathbf Z}$ is constrained objective function of soccer player $i$ such that Definition \ref{def1} holds.\\
(iii). For each strategy $W_i\in w$, define the rain or other environmental random factors which leads to a stoppage or termination of game $M+1$ at time $s$ as $\bm\tau_{W_i}$, which is a finitely-additive probability measure on the Borel $\sigma$-algebra on $\mathbb R^{\mathbf Z}$ and is proper.
\end{defn}

 Following \cite{hellman2019}, two main types of logic are used in game theory: First-order Logic and Infinitary Logic. First-order mathematical logic is built on finite base language based on the connective symbols such as conjunction, negation, conversion, inversion and  contrapositivity; countable collection of variables, quantifier symbols, constant symbols, predicate symbols and function symbols \citep{hellman2019}. Quantum formulae are of the form of $h(0,...,t)$ such that the game operates in a quantum field with the characteristic-like quantum generator defined in Definition \ref{def1}. Comparing this statement with the interpretation of Atomic formulae in \cite{hellman2019} we can say $h$ is a functional predicate symbol on terms $0,...,t$. Our Quantum formulae are more generalized version of Atomic formulae in the sense that, Quantum formulae consider improper differentiability on  $2$-sphere continuous strategy available for all the players in both of the teams.
 
 The problem of constructing a First-order logic is that, it can only handle expressions  of finite variables. Dealing with infinite variables in strategy space is the primary objective in our paper. When player $i$ tries to score a goal, they change their actions based on the strategies of the opposition team and on their skills. If player $i$ is a senior player, then opposition team has more information about that player's strength or weakness which is also known to player $i$. Therefore, at time $s$ of match $M+1$ player $i$'s action is mixed. Furthermore, we assume each player's strategy set is a convex polygon with each side has the length of unity. The reason is that, probability of choosing a strategy is in between $0$ and $1$. Therefore, if a player has three strategies, their strategy set is a equilateral triangle with each side of length unity.
   
 To get more generalized result we extend the standard First-order logic to Infinitary logic \citep{hellman2019}. This logic considers mixed actions with infinite possible strategies such that for each player is able to play a combination of infinitely many strategies at infinite number of states. For example, a striker $i$ gets the ball at time $s$ either from their team mate or by a result of a missed pass from an opponent. As striker $i$'s objective is to kick the ball through the goal, their strategy depends on to total number of opponents between them and the goal. Therefore, striker $i$ plays a mixed action or striker $i$ places weight $\a^i$ on scoring strategy $a^i$ at total number of goal scored $\mathbf Z_s$ at time $s$. For a countable collection of objective functions $\{{\bf{OB}}_\a^i\}_{i=1}^\infty$ such that $\bigwedge_{i=1}^\infty{\bf{OB}}_\a^i$ and $\bigvee_{i=1}^\infty{\bf{OB}}_\a^i$ exist, converges to $\overline{\mathbf Z}_\a$ in real numbers via the formula 
 \[
 {\bf{OB}}_\a\left(\{\overline{\mathbf Z}_\a^i\}_{i=1}^I, \overline{\mathbf Z}_\a\right)=\forall\varepsilon\left[\varepsilon>0\ra\bigvee_{I\in\mathbb N}\bigwedge_{i>I}\left(\overline{\mathbf Z}_\a^i-\overline{\mathbf Z}_\a\right)^2<\varepsilon^2\right]
 \]
 or, without the quantifier the above statement becomes
 \[
 {\bf{OB}}_\a\left(\{\overline{\mathbf Z}_\a^i\}_{i=1}^I, \overline{\mathbf Z}_\a\right)=\bigwedge_{\mathcal K\in\mathbb N}\bigvee_{I\in\mathbb N}\bigwedge_{i>I}\mathcal K^2\left(\overline{\mathbf Z}_\a^i-\overline{\mathbf Z}_\a\right)^2<1.
 \]
 
 \subsection{$\sqrt{8/3}$ Liouville quantum gravity surface}
 Following \cite{gwynne2016} we know, a Liouville quantum gravity (LQG) surface is a random Riemann surface parameterized by a domain $\mathbb D\subset\mathbb S^{(I\times I')\times t}$ with Riemann metric tensor $e^{\gamma k(l)} d\mathbf{Z}\otimes d\widehat{\mathbf{Z}}$, where $\gamma\in(0,2)$, $k$ is some variant of the Gaussian free field (GFF) on $\mathbb D$, $l$ is some number coming from $2$-sphere $\mathbb S^{(I\times I')\times t}$ and $d\mathbf{Z}\otimes d\widehat{\mathbf{Z}}$ is Euclidean metric tensor. In this paper we consider the case where $\gamma=\sqrt{8/3}$ because, it corresponds to a uniformly random planer maps. Because of incomplete and imperfect information the strategy space is quantum in nature and each player's decision is a point on a dynamic convex strategy polygon of that quantum strategy space. Furthermore, $k:\mathbb S^{(I\times I')\times t}\ra \mathbb R^{I\times I'}$ is a distribution such that each player's action which can be represented different shots to the goal including dribbling and passing. Although the strategy set is deterministic, the action on this space is stochastic.
 
 \begin{defn}
 	An equivalence relation $\mathcal E$ on $\mathbb S$ is smooth if there is another $2$-sphere $\mathbb S'$ and a distribution function with a conformal map $k':\mathbb S\ra\mathbb S'$ such that for all $l_1,l_2\in\mathbb S$, we have $l_1\mathcal E l_2\iff k(l_1)=k(l_2)$.
 \end{defn}

Now if $\mathcal E$ is the equivalence relation of each player's action, the smooth distribution function $k'$ is an auxiliary tool which helps determining whether $l_1$ and $l_2$ are in the same action component which occurs iff $k(l_1)=k(l_2)$.

\begin{exm}
	Suppose $\mathbb S^I=\mathbb C^I$, where $\mathbb C^I$ represents a complex space. The relation given $l_1\sim_{\mathcal E} l_2$ if and only if $l_1-l_2\in\mathbb S'$ is smooth as the distribution $k:\mathbb C^I\ra[0,1)^I$ is defined by $k(l_1,...,l_I)=(l_1-\lfloor l_1\rfloor,...,l_I-\lfloor l_I\rfloor )$, where $\lfloor l_m\rfloor=\max\{l^*\in\mathcal S'|l^*>l_m\}$ is the integer part of $l_m$, then $k(l_1)=k(l_2)$ iff $l_1\mathcal E l_2$.
\end{exm} 

Furthermore, $\sqrt{8/3}$-LQG surface is an equivalence class of action on $2$-sphere $(D,k)$ such that $D\subset\mathbb S^{(I\times I')\times t}$ is open and $k$ is a distribution function which is some variant of a GFF \citep{gwynne2016}. Action pairs $(D,k)$ and $(\widetilde D,\tilde k)$ are equivalent if there exists a conformal map $\zeta:\widetilde D\ra D$ such that, $\tilde k=k\circ\zeta+Q\log|\zeta'|$, where $Q=2/\gamma+\gamma/2=\sqrt{3/2}+\sqrt{2/3}$ \citep{gwynne2016}.

Suppose, $I$ be a non-empty finite set of players, $\mathcal F_s^{\bf Z}$ be the filtration of goal $\bf Z$, $\Omega$ be a sample space. There for each player $i\in I$ an equivalence relationship $\mathcal E_i\in\mathcal E$ on $2$-sphere, called player $i$'s quantum knowledge. Therefore, $\sqrt{8/3}$-LQG player knowledge space at time $s$ is $(\Omega,\mathcal F_s^{\bf Z},\mathbb S, I,\mathcal E)$. Given a $\sqrt{8/3}$-LQG player knowledge space $(\Omega,\mathcal F_s^{\bf Z},\mathbb S, I,\mathcal E)$ the equivalence relationship $\mathcal E$ is the transitive closure of $\bigcup_{i\in I}\mathcal E_i$.

\begin{defn}
	A knowledge space $(\Omega,\mathcal F_s^{\bf Z},\mathbb S, I,\mathcal E)$ such that $i\in I$, each equivalent class of $\mathcal E_i$ with Riemann metric tensor $e^{\sqrt{8/3}k(l)}d\bf Z\otimes d\widehat{\mathbf{Z}}$ is finite, countably infinite or uncountable is defined as purely $\sqrt{8/3}$-LQG knowledge space which is purely quantum in nature (for detailed discussion about purely atomic knowledge see \cite{hellman2019}).
\end{defn}

\begin{defn}
	For a fixed quantum knowledge space $(\Omega,\mathcal F_s^{\bf Z},\mathbb S, I,\mathcal E)$, for player $i$ a dribbling and passing function $p^i$ is a mapping $p^i:\Omega\times\mathbb S\ra\Delta(\Omega\times \mathbb S)$ which is $\sigma$-measurable and the equivalence relationship has some measure in $2$-sphere.
\end{defn}
Therefore, dribbling and passing space is a tuple $(\Omega,\mathcal F_s^{\bf Z},\mathbb S, I,p)$ which is a type of $\sqrt{8/3}$-LQG knowledge space. There are other skills needed to score a goal such as power, speed, agility, shielding, tackling, trapping and shooting but we assume only dribbling and passing function is directly related to the quantum knowledge. Rest of the uncertainties are coming from the stochastic part of the goal dynamics. The dribbling and passing space of player $i$ implicitly defines quantum knowledge relationship $\mathcal E_i$ of dribbling and passing functions. Hence, $(z,l)\mathcal E_i(z',l')$ iff $p_{z,l}^i=p_{z',l'}^i$, where $z$ is the goal situation according to player $i$'s perspective and $l$ is player $i$'s action on $2$-sphere at time $s$.
\begin{defn}
	For player $i\in I$ and for all $z\in\Omega$, $l\in\mathbb S$ dribbling and passing space tuple $(\Omega,\mathcal F_s^{\bf Z},\mathbb S, I,p)$ has a function $p_{z,l}^i$ which is purely quantum. Therefore, this space is purely quantum dribbling and passing space.
\end{defn}

\begin{definition}
	For $(\Omega,\mathcal F_s^{\bf Z},\mathbb S, I,p)$ if $p_{z,l}^i[z,l]>0$ for all $i\in I$, $z\in\Omega$ and $l\in\mathbb S$ then it is positive. Furthermore, the dribbling and passing space is purely quantum and positive, then the knowledge space is $\sqrt{8/3}$-LQG.
\end{definition}

\begin{defn}
	A purely quantum space $(\Omega,\mathcal F_s^{\bf Z},\mathbb S, I,p)$ is smooth if $\Omega$, the $2$-sphere $\mathbb S$ and the common quantum knowledge equivalence relation $\mathcal E$ is smooth. As $k$ is a version of GFF the quantum space is not smooth at the vicinity of the singularity and for time being we exclude those point to make dribbling and the passing space smooth. 
\end{defn}
\begin{example}
	Suppose, a male soccer team has three strikers A, B and C such that player A is a left wing, B is a center-forward and C is a right wing. Furthermore, at time $s$ player B faces an opposition center defensive midfielder (CDM) with at least one of the center backs (left or right) and decides to pass the ball either of players A and C as they have relatively unmarked positions. Player A knows, if B passes him, based on goal condition $z_1$ based on his judgment he will take an attempt to score a goal either by taking a long distance shot with conditional probability $u(u_1|z_1)$ or by running with the ball closer to the goal post with probability $u(u_2|z_1)$, where for $k=1,2$, $u_k$ takes the value $1$ when player A is able to score by using any of the two approaches. As the dribbling and passing space is purely quantum, player A's expected payoff to score is $u(u_1|z_1)e^{\sqrt{8/3}k_1(l_1)}+u(u_2|z_1)e^{\sqrt{8/3}k_1(l_2)}$, where $l_k$ is the $k^{th}$ point observed on player A's $2$-sphere $\mathbb S$. Similarly, player C has the payoff of scoring a goal is $u(u_3|z_2)e^{\sqrt{8/3}k_2(l_3)}+u(u_4|z_2)e^{\sqrt{8/3}k_2(l_4)}$, where $u_3$ and $u_4$ are the probabilities of taking a direct shot to goal and running closer to goal post and score. However, player B's decision tends to be somewhat scattered, as he has to make a choice of passing either of A or C or mixes up the calculation and decides to goal by himself. Consider player B mixes up strategy and decides to goal by himself with probability $v$ or he passes either of A and C with probability $(1-v)$, which is know to players A and C.
	
	Let us model the quantum knowledge space as of these three players as $\Omega=\mathbb{R}_+\times \mathbb{R}_+\times \mathbb{R}_+$ and $\mathbb S=\mathbb{R}_+\times \mathbb{R}_+\times \mathbb{R}_+$. Once, the payoff left wing A and right wing C has been revealed, player B calculates the ratio of of two expected payoffs. Therefore, player A knows that, player B is is giving him a pass if 
	\[
	\frac{u(u_1|z_1)e^{\sqrt{8/3}k_1(l_1)}+u(u_2|z_1)e^{\sqrt{8/3}k_1(l_2)}}{u(u_3|z_2)e^{\sqrt{8/3}k_2(l_3)}+u(u_4|z_2)e^{\sqrt{8/3}k_2(l_4)}}>1
	\]
	with probability $(1-v)$ and with probability $v$ player B decides to score himself if the ratio is less than equal to $1$. As both of the wing players calculate the ratio of their expected payoffs, it is enough to define $\Omega=\mathbb R_+\times \mathbb R_+$ and $\mathbb S=\mathbb R_+\times \mathbb R_+$ for two wing players without losing any information. The quantum equivalent classes $\mathcal E^A$ which is player A's knowledge can be represented as
	\begin{align*}
	&\left[\left(0,\frac{u(u_1|z_1)e^{\sqrt{8/3}k_1(l_1)}+u(u_2|z_1)e^{\sqrt{8/3}k_1(l_2)}}{u(u_3|z_2)e^{\sqrt{8/3}k_2(l_3)}+u(u_4|z_2)e^{\sqrt{8/3}k_2(l_4)}}>1\right),\right.\\&\left.\left(0,\frac{u(u_1|z_1)e^{\sqrt{8/3}k_1(l_1)}+u(u_2|z_1)e^{\sqrt{8/3}k_1(l_2)}}{u(u_3|z_2)e^{\sqrt{8/3}k_2(l_3)}+u(u_4|z_2)e^{\sqrt{8/3}k_2(l_4)}}\leq 1\right)\right]_{(z_k\in\mathbb R_+,l_k\in \mathbb R_+,\ \text{for all $k=1,2$})},
	\end{align*}
	and player B's knowledge corresponding to the equivalent class $\mathcal E^B$ is 
	\begin{align*}
&\left[\left(\frac{u(u_3|z_2)e^{\sqrt{8/3}k_2(l_3)}+u(u_4|z_2)e^{\sqrt{8/3}k_2(l_4)}}{u(u_1|z_1)e^{\sqrt{8/3}k_1(l_1)}+u(u_2|z_1)e^{\sqrt{8/3}k_1(l_2)}}>1,0\right),\right.
\\ &\left.\left(\frac{u(u_3|z_2)e^{\sqrt{8/3}k_2(l_3)}+u(u_4|z_2)e^{\sqrt{8/3}k_2(l_4)}}{u(u_1|z_1)e^{\sqrt{8/3}k_1(l_1)}+u(u_2|z_1)e^{\sqrt{8/3}k_1(l_2)}}\leq 1,0\right)\right]_{(z_k\in\mathbb R_+,l_k\in \mathbb R_+,\ \text{for all $k=1,2$})}.
	\end{align*}
	Therefore, the belief of player A is 
	\begin{align*}
	p_{z,l}^A[z,l]&=p_{[u(u_1|z_1)e^{\sqrt{8/3}}k_1(l_1),u(u_2|z_1)e^{\sqrt{8/3}}k_1(l_2)]}^A[z,l]\\&=\begin{cases}
	1-v & \text{if}\  \frac{u(u_1|z_1)e^{\sqrt{8/3}k_1(l_1)}+u(u_2|z_1)e^{\sqrt{8/3}k_1(l_2)}}{u(u_3|z_2)e^{\sqrt{8/3}k_2(l_3)}+u(u_4|z_2)e^{\sqrt{8/3}k_2(l_4)}}>1,\\
	v & \text{if}\ \frac{u(u_1|z_1)e^{\sqrt{8/3}k_1(l_1)}+u(u_2|z_1)e^{\sqrt{8/3}k_1(l_2)}}{u(u_3|z_2)e^{\sqrt{8/3}k_2(l_3)}+u(u_4|z_2)e^{\sqrt{8/3}k_2(l_4)}}\leq 1,\\
	0 & \text{otherwise},
	\end{cases}
	\end{align*}
	and similarly for the right wing B,
\begin{align*}
p_{z,l}^B[z,l]&=p_{u(u_3|z_2)e^{\sqrt{8/3}k_2(l_3)},u(u_4|z_2)e^{\sqrt{8/3}k_2(l_4)}}^B[z,l]\\&=\begin{cases}
1-v & \text{if}\ \frac{u(u_3|z_2)e^{\sqrt{8/3}k_2(l_3)}+u(u_4|z_2)e^{\sqrt{8/3}k_2(l_4)}}{u(u_1|z_1)e^{\sqrt{8/3}k_1(l_1)}+u(u_2|z_1)e^{\sqrt{8/3}k_1(l_2)}}>1,\\
v & \text{if}\ \frac{u(u_3|z_2)e^{\sqrt{8/3}k_2(l_3)}+u(u_4|z_2)e^{\sqrt{8/3}k_2(l_4)}}{u(u_1|z_1)e^{\sqrt{8/3}k_1(l_1)}+u(u_2|z_1)e^{\sqrt{8/3}k_1(l_2)}}\leq 1,\\
0 &\text{otherwise}.
\end{cases}	
\end{align*}
\end{example}

\subsection{$\sqrt{8/3}$ Liouville quantum gravity metric}
For domain $\mathbb D\subset \mathbb S^{(I\times I')\times t}$ and for a variant of GFF the distribution $k$ suppose, $(\mathbb D, k)$ is a $\sqrt{8/3}$-LQG action surface. As $k$ is a variant of GFF, $k$ induces a metric $\bm \omega_k$ on $\mathbb{D}$ and furthermore, if $(\mathbb D, k)$ is a quantum sphere, the metric space $(\mathbb D, \bm\omega_k)$ is isometric to the Brownian map with two additional properties: a Brownian disk is obtained if $(\mathbb D, k)$ is a quantum disk and a Brownian plane is obtained if $(\mathbb D, k)$ is a $\sqrt{8/3}$-quantum cone \citep{bettinelli2017,curien2014,gwynne2016}. Following \cite{gwynne2016}  we know that, $\sqrt{8/3}$-LQG surface can be represented as a Brownian surface with conformal structure. We assume the strategy space where the action is taken has the property like $\sqrt{8/3}$-LQG surface because, each soccer player has radius $r$ around themselves such that, if an opponent player comes in this radius he would be able to tackle. Furthermore, if $r\ra 0$, the player has a complete control over the opponent under no mistakes and same skill level. Therefore, the strategy space closer to the player (i.e. $r=0$) bends towards himself in such a way that, the surface can be approximated to a surface on a $2$-sphere and furthermore, as the movement on this space is stochastic in nature, it behaves like a Brownian surface with its convex strategy polygon changes its shape at every time point based on the condition of the game. At $r=0$ the surface hits essential singularity and the player has infinite power to control over the ball. 

The $\sqrt{8/3}$-LQG metric will be constructed on a $2$-sphere $(\mathbb S,k)$ which is also quantum in nature. Based on the distribution function $k$ suppose, $C_i$ be the collection of i.i.d. locations of player $i$ on the strategy space sampled uniformly from the area of their convex dynamic polygon $\be_{k_i}$. Furthermore, consider inside a polygon with area measure $\be_{k_i}$, player $i$'s action at time $s$ is $a_{s}^i$ and at time $\tau$ is $a_\tau^i$ such that, for $\varepsilon>0$ we define $\tau=s+\varepsilon$. Therefore, $a_s^i,a_\tau^i\in C_i$ is a Quantum Loewner Evolution growth process $\{\mathcal G_{\tilde s}^{a_s^i,a_\tau^i}\}_{\tilde s\geq 0}$ starting from $a_s^i$ and ending at $a_\tau^i$ for all $\tilde s\in[s,\tau]$. 

Let there are two growth processes $\{\mathcal G_{\tilde s}^{a_s^i,a_\tau^i}\}_{\tilde s\geq 0}$ and $\{\mathcal G_{*\tilde s}^{a_s^i,a_\tau^i}\}_{\tilde s\geq 0}$ where both of them starts at $a_s^i$ and ends at $a_\tau^i$. We will show on this surface they are homotopic in nature. Suppose $\{p_i^0,p_i^1,...,p_i^M\}$ are $M$-partitions on player $i$'s finite convex strategy polygon $\be_{k_i}$ on $\mathbb S^{(I\times I')\times t}$. Hence, each of $\be_k\in\be_{k_i}$ can be uniquely represented by $\be_k=\sum_{n=0}^M\theta_n(\be_k)p_i^n$, where $\theta_n(\be_k)\in[0,1]$ for all $n=0,...,M$ and $\sum_{n=0}^M\theta_n(\be_k)=1$. Thus,
\[
\theta_n(\be_k)=\begin{cases}
\text{barycentric coordinate of $\be_k$ relative to $p_i^n$ if $p_i^n$ is a vertex of $\be_{k_i}$,}\\ 0 \ \ \text{otherwise.}
\end{cases}
\]
Therefore, $\theta_n(\be_k)\neq 0$ are barycentric coordinates of $\be_k$. Furthermore, as $\theta_n$ is identically equal to zero or the barycentric function of $\be_{k_i}$ relative to the vertex $p_i^n$, $\theta_n:\be_{k_i}\ra \mathbf I$ is continuous, where $\mathbf I$ is the range of $p_i^n$ which takes the value between $0$ and $1$. Now assume the growth process has the map $\{\mathcal G_{\tilde s}^{a_s^i,a_\tau^i}\}_{\tilde s\geq 0}:\mathbf Y_i\ra\be_{k_i}$, where $\mathbf Y_i$ is an arbitrary topological space and it has a unique expression $\{\mathcal G_{\tilde s}^{a_s^i,a_\tau^i}\}_{\tilde s\geq 0}(\mathbf Y_i)=\sum_{n=0}^M\theta_n\circ p_i^n\{\mathcal G_{\tilde s}^{a_s^i,a_\tau^i}\}_{\tilde s\geq 0}$. As this growth process is a Quantum Loewner Evolution, it is continuous. Therefore, $\theta_n\circ \{\mathcal G_{\tilde s}^{a_s^i,a_\tau^i}\}_{\tilde s\geq 0}:\mathbf Y_i\ra \mathbf I$ is continuous. 

\begin{lem}
Let $\be_{k_i}$ be a finite convex strategy polygon of player $i$ in $\mathbb S^{(I\times I')\times t}$ such that the vertices are $\{p_i^0,...,p_i^M\}$. Suppose $\mathbf Y_i$ be an arbitrary topological space and let $\{\mathcal G_{n,\tilde s}^{a_s^i,a_\tau^i}\}_{\tilde s\geq 0}:\mathbf Y_i\ra\mathbf I$ be a family of growth processes, one for each vertex $p_i^n$, with $\sum_{n=0}^Mp_i^n\{\mathcal G_{n,\tilde s}^{a_s^i,a_\tau^i}(y_i)\}_{\tilde s\geq 0}\in\be_{k_i}$ for each $y_i\in\mathbf Y_i$. Then $y_i\mapsto\sum_{n=0}^Mp_i^n\{\mathcal G_{n,\tilde s}^{a_s^i,a_\tau^i}(y_i)\}_{\tilde s\geq 0}\in\be_{k_i}$ is a continuous map $\{\mathcal G_{\tilde s}^{a_s^i,a_\tau^i}\}_{\tilde s\geq 0}:\mathbf Y_i\ra\be_{k_i}$ as $\{\mathcal G_{\tilde s}^{a_s^i,a_\tau^i}\}_{\tilde s\geq 0}$ is a Quantum Loewner Evolution.
\end{lem}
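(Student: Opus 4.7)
\medskip

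\noindent\textbf{Proof proposal.} The plan is to realize $\be_{k_i}$ as a compact convex subset of an ambient real vector space and verify that the claimed map is a finite combination of continuous coordinate functions, each continuity claim reducing to a single standing fact (continuity of QLE trajectories). First I would let $V$ denote the affine hull of the vertex set $\{p_i^0,\ldots,p_i^M\}$ inside an ambient real vector space containing $\be_{k_i}$, using the smoothness of the $\sqrt{8/3}$-LQG surface away from GFF singularities to treat $\be_{k_i}$ as the image of a standard $M$-simplex under a homeomorphism. Inside $V$, scalar multiplication and vector addition are jointly continuous, so for each fixed vertex $p_i^n$ the map $t\mapsto t\, p_i^n$ from $\mathbf I$ into $V$ is continuous.

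Second, by the standing hypothesis each growth process $\{\mathcal G_{n,\tilde s}^{a_s^i,a_\tau^i}\}_{\tilde s\ge 0}:\mathbf Y_i\ra \mathbf I$ is continuous, since QLE is a continuous growth process in the sense recalled in the paragraphs preceding the lemma. Composing with the continuous scalar-multiplication map gives a continuous map $y_i\mapsto p_i^n\{\mathcal G_{n,\tilde s}^{a_s^i,a_\tau^i}(y_i)\}_{\tilde s\ge 0}$ from $\mathbf Y_i$ into $V$. A finite sum of $V$-valued continuous maps is continuous, so $y_i\mapsto\sum_{n=0}^M p_i^n\{\mathcal G_{n,\tilde s}^{a_s^i,a_\tau^i}(y_i)\}_{\tilde s\ge 0}:\mathbf Y_i\ra V$ is continuous. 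The hypothesis that every such sum lies in $\be_{k_i}$, combined with the subspace topology, yields the required corestriction $\{\mathcal G_{\tilde s}^{a_s^i,a_\tau^i}\}_{\tilde s\ge 0}:\mathbf Y_i\ra\be_{k_i}$. Equivalently, identifying $\{\mathcal G_{n,\tilde s}^{a_s^i,a_\tau^i}\}_{\tilde s\ge 0}$ with the barycentric coordinate $\theta_n\circ\{\mathcal G_{\tilde s}^{a_s^i,a_\tau^i}\}_{\tilde s\ge 0}$ reduces the statement to the classical fact that a map into a simplex is continuous iff each of its barycentric coordinate functions is.

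The hard part will be the topological set-up, not the combinatorics. The $2$-sphere $\mathbb S^{(I\times I')\times t}$ is endowed with the $\sqrt{8/3}$-LQG metric $\bm\omega_k$, which degenerates near the singular locus of the GFF $k$; one must justify that $\be_{k_i}$ lies in the smooth part of the LQG surface (as permitted by the smooth-quantum-space definition above), so that the affine embedding of the polygon into $V$ is a homeomorphism onto its image with respect to $\bm\omega_k$. Once this is handled by excluding a small neighbourhood of the singular set and invoking the Brownian-map representation of \citet{gwynne2016}, the continuity argument above goes through verbatim, and the vertex-barycentric expression $\{\mathcal G_{\tilde s}^{a_s^i,a_\tau^i}\}_{\tilde s\ge 0}(y_i)=\sum_{n=0}^M p_i^n\{\mathcal G_{n,\tilde s}^{a_s^i,a_\tau^i}(y_i)\}_{\tilde s\ge 0}$ delivers the claim.
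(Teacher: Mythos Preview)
Your proposal is correct and follows the same two-step skeleton as the paper's proof: (i) invoke continuity of the QLE growth processes to get continuity of the assembled map into the ambient space, and (ii) corestrict to $\be_{k_i}$ using the hypothesis that all values land there. The paper's argument is extremely terse---it simply asserts that $\{\mathcal G_{\tilde s}^{a_s^i,a_\tau^i}\}_{\tilde s\geq 0}$ is continuous because it is a QLE, observes the containment $\{\mathcal G_{\tilde s}^{a_s^i,a_\tau^i}\}_{\tilde s\geq 0}(\mathbf Y_i)\subset\be_{k_i}\subset\mathbb S^{(I\times I')\times t}$, and declares the result---whereas you spell out the intermediate step (continuity of scalar multiplication and finite addition in the ambient vector space $V$) that the paper leaves implicit.

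One remark: the ``hard part'' you flag---compatibility of the affine embedding with the $\sqrt{8/3}$-LQG metric $\bm\omega_k$ and exclusion of the GFF singular locus---is not addressed at all in the paper's proof; the paper treats $\be_{k_i}$ as sitting inside $\mathbb S^{(I\times I')\times t}$ with its subspace topology and does not revisit the metric issue. So your extra paragraph is more careful than the source, but it is not needed to match the paper's level of rigor, and you could safely omit it.
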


\begin{proof}
	As $\{\mathcal G_{\tilde s}^{a_s^i,a_\tau^i}\}_{\tilde s\geq 0}$ is a Quantum Loewner Evolution, it is continuous. Furthermore, as $\{\mathcal G_{\tilde s}^{a_s^i,a_\tau^i}\}_{\tilde s\geq 0}(\mathbf Y_i)\subset\be_{k_i}\subset\mathbb S^{(I\times I')\times t}$, the continuity of $\{\mathcal G_{\tilde s}^{a_s^i,a_\tau^i}\}_{\tilde s\geq 0}$ as a map of $\mathbf Y_i$ into $\be_{k_i}$ follows.
\end{proof}

\begin{lem}
Let $\be_{k_i}$ be a finite convex strategy polygon of player $i$ in $\mathbb S^{(I\times I')\times t}$. Suppose, $\mathbf Y_i$ be an arbitrary space such that, $\{\mathcal G_{\tilde s}^{a_s^i,a_\tau^i}\}_{\tilde s\geq 0},\{\mathcal G_{*\tilde s}^{a_s^i,a_\tau^i}\}_{\tilde s\geq 0}:\mathbf Y_i\ra\be_{k_i} $ is a Quantum Loewner Evolution. For each $y_i\in\mathbf Y_i$ suppose there is a smaller finite convex strategy polygon $\rho\in\be_{k_i}$ containing both $\{\mathcal G_{\tilde s}^{a_s^i,a_\tau^i}(y_i)\}_{\tilde s\geq 0}$ and $\{\mathcal G_{*\tilde s}^{a_s^i,a_\tau^i}(y_i)\}_{\tilde s\geq 0}$. Then these two growth processes $\{\mathcal G_{\tilde s}^{a_s^i,a_\tau^i}\}_{\tilde s\geq 0}$ and $\{\mathcal G_{*\tilde s}^{a_s^i,a_\tau^i}\}_{\tilde s\geq 0}$ are homotopic.
\end{lem}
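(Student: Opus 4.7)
The plan is to construct an explicit straight-line homotopy between the two Quantum Loewner Evolution growth processes, leveraging the convexity of the auxiliary polygon $\rho$ to ensure that the homotopy stays inside $\be_{k_i}$, and appealing to the previous lemma for continuity of each individual growth process.

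First, I would define the candidate homotopy $H:\mathbf Y_i\times\mathbf I\ra\be_{k_i}$ by
\[
H(y_i,t)=(1-t)\{\mathcal G_{\tilde s}^{a_s^i,a_\tau^i}(y_i)\}_{\tilde s\geq 0}+t\{\mathcal G_{*\tilde s}^{a_s^i,a_\tau^i}(y_i)\}_{\tilde s\geq 0},
\]
where the convex combination is interpreted through the barycentric coordinates $\theta_n(\cdot)$ relative to the vertices $\{p_i^0,\ldots,p_i^M\}$ of $\be_{k_i}$, exactly as set up in the paragraph preceding the previous lemma. Concretely, one writes each image $\mathcal G_{\tilde s}^{a_s^i,a_\tau^i}(y_i)=\sum_{n=0}^M\theta_n[\mathcal G_{\tilde s}^{a_s^i,a_\tau^i}(y_i)]p_i^n$ and similarly for $\mathcal G_{*\tilde s}^{a_s^i,a_\tau^i}$, and forms the segment in the barycentric chart; at $t=0$ and $t=1$ this recovers the two processes as required.

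Second, I would verify that $H$ indeed lands in $\be_{k_i}$: by hypothesis, for every $y_i\in\mathbf Y_i$ both endpoints lie in a common smaller convex strategy polygon $\rho\subset\be_{k_i}$, and convexity of $\rho$ gives $(1-t)u+tv\in\rho\subset\be_{k_i}$ for every $u,v\in\rho$ and every $t\in\mathbf I$. Thus the whole segment stays inside $\be_{k_i}$ for each fixed $y_i$, so $H$ is well-defined.

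Third, I would establish continuity of $H$. The previous lemma already shows that each Quantum Loewner Evolution growth process is continuous as a map $\mathbf Y_i\ra\be_{k_i}$, and each barycentric coordinate $\theta_n$ is a continuous map $\be_{k_i}\ra\mathbf I$. Hence each composition $\theta_n\circ\mathcal G_{\tilde s}^{a_s^i,a_\tau^i}$ and $\theta_n\circ\mathcal G_{*\tilde s}^{a_s^i,a_\tau^i}$ is continuous, and the weighted sum $H$ is then continuous on the product $\mathbf Y_i\times\mathbf I$ by continuity of addition and scalar multiplication in the barycentric chart. Combined with the endpoint identities, this produces a homotopy in the classical sense.

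The main obstacle will be giving a rigorous meaning to ``linear interpolation'' on a $\sqrt{8/3}$-LQG surface, which is a Brownian surface with conformal structure rather than a literal Euclidean region: one cannot take affine combinations of points on a Brownian map naively. The whole argument is designed to route this through the barycentric chart on the convex polygon $\be_{k_i}$ together with the hypothesis that a smaller convex polygon $\rho$ contains the two images, so that the interpolation is defined inside a flat chart and pulled back to the LQG surface via the conformal parameterization $\zeta$ relating $(\widetilde D,\tilde k)$ to $(D,k)$. Verifying that the pullback of the straight segment remains a well-defined continuous path on the Brownian map, without passing through the GFF singularity, is where the delicate point lies; the smoothness hypothesis on $(\Omega,\mathcal F_s^{\bf Z},\mathbb S,I,p)$ (excluding vicinities of the essential singularity at $r=0$) is precisely what lets the straight-line homotopy be transferred back to $\be_{k_i}$ without leaving the LQG chart.
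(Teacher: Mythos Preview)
Your proposal is correct and follows essentially the same route as the paper: the paper also constructs the straight-line homotopy in barycentric coordinates, writing $\mathcal S(y_i,U)=\sum_{n=0}^M[U\theta_n(\mathcal G(y_i))+(1-U)\theta_n(\mathcal G_*(y_i))]$, observes that the segment lies in $\rho\subset\be_{k_i}$ by convexity, and deduces continuity from the continuity of each $\theta_n\circ\mathcal G$ and $\theta_n\circ\mathcal G_*$. Your final paragraph on the LQG obstacle is extra caution the paper does not undertake---it simply works in the barycentric chart on $\be_{k_i}$ throughout---so you may safely omit it.
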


\begin{proof}
	For each $y_i\in\mathbf Y_i$ the line joining two growth process $\{\mathcal G_{\tilde s}^{a_s^i,a_\tau^i}(y_i)\}_{\tilde s\geq 0}$ and $\{\mathcal G_{*\tilde s}^{a_s^i,a_\tau^i}(y_i)\}_{\tilde s\geq 0}$ is in $\rho$. Therefore, this line segment is definitely inside $\be_{k_i}$. The representation of this segment is
	\begin{align*}
	\mathcal S(y_i,U)=\sum_{n=0}^M \left[U\theta_n\left(\{\mathcal G_{\tilde s}^{a_s^i,a_\tau^i}(y_i)\}_{\tilde s\geq 0}\right)+(1-U)\theta_n\left(\{\mathcal G_{*\tilde s}^{a_s^i,a_\tau^i}(y_i)\}_{\tilde s\geq 0}\right)\right].
	\end{align*}
	Furthermore, as the mapping $U\theta_n\circ \{\mathcal G_{\tilde s}^{a_s^i,a_\tau^i}\}_{\tilde s\geq 0}+(1-U)\theta_n\circ \{\mathcal G_{*\tilde s}^{a_s^i,a_\tau^i}\}_{\tilde s\geq 0}:\mathbf Y_i\times\mathbf I\ra\mathbf I$ is continuous, the function $\mathcal S:\mathbf Y_i\times\mathbf I\ra\mathbf I$ is continuous. This is enough to show homotopy.
\end{proof}

\begin{cor}
Let $\be_{k_i}$ be a finite convex strategy polygon of player $i$ in $\mathbb S^{(I\times I')\times t}$. Suppose, $\{\mathcal G_{\tilde s}^{a_s^i,a_\tau^i}\}_{\tilde s\geq 0},\{\mathcal G_{*\tilde s}^{a_s^i,a_\tau^i}\}_{\tilde s\geq 0}:\be_{k_i}\ra\be_{k_i} $ is a Quantum Loewner Evolution. For each $\be_k\in\be_{k_i}$ suppose there is a smaller finite convex strategy polygon $\rho\in\be_{k_i}$ containing both $\{\mathcal G_{\tilde s}^{a_s^i,a_\tau^i}(\be_k)\}_{\tilde s\geq 0}$ and $\{\mathcal G_{*\tilde s}^{a_s^i,a_\tau^i}(\be_k)\}_{\tilde s\geq 0}$. Then these two growth processes $\{\mathcal G_{\tilde s}^{a_s^i,a_\tau^i}\}_{\tilde s\geq 0}$ and $\{\mathcal G_{*\tilde s}^{a_s^i,a_\tau^i}\}_{\tilde s\geq 0}$ are homotopic.	
\end{cor}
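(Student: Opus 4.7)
The plan is to obtain this corollary as a direct specialization of the preceding lemma, taking the abstract topological space $\mathbf Y_i$ there to be the polygon $\be_{k_i}$ itself. Since $\be_{k_i}$ inherits a subspace topology from $\mathbb S^{(I\times I')\times t}$, it is a perfectly legal choice of $\mathbf Y_i$, and under this substitution the hypotheses of the lemma coincide, line by line, with those of the corollary. Thus the proof is essentially a bookkeeping exercise.

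First I would observe that the two growth processes $\{\mathcal G_{\tilde s}^{a_s^i,a_\tau^i}\}_{\tilde s\geq 0}$ and $\{\mathcal G_{*\tilde s}^{a_s^i,a_\tau^i}\}_{\tilde s\geq 0}$, viewed as maps $\be_{k_i}\to\be_{k_i}$, are continuous because every Quantum Loewner Evolution is continuous by the first lemma of this subsection and, by assumption, each sends $\be_{k_i}$ into itself. Hence for each vertex index $n=0,\ldots,M$ the compositions $\theta_n\circ\{\mathcal G_{\tilde s}^{a_s^i,a_\tau^i}\}_{\tilde s\geq 0}$ and $\theta_n\circ\{\mathcal G_{*\tilde s}^{a_s^i,a_\tau^i}\}_{\tilde s\geq 0}$ are continuous maps $\be_{k_i}\to\mathbf I$, exactly as in the proof of the preceding lemma.

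Next I would copy the straight-line interpolation of that lemma, now parameterized by $\be_k\in\be_{k_i}$, and set
$$
\mathcal S(\be_k,U)=\sum_{n=0}^M p_i^n\bigl[U\,\theta_n\bigl(\{\mathcal G_{\tilde s}^{a_s^i,a_\tau^i}(\be_k)\}_{\tilde s\geq 0}\bigr)+(1-U)\,\theta_n\bigl(\{\mathcal G_{*\tilde s}^{a_s^i,a_\tau^i}(\be_k)\}_{\tilde s\geq 0}\bigr)\bigr],
$$
for $U\in\mathbf I$. The key point to check is that $\mathcal S(\be_k,U)$ stays inside $\be_{k_i}$ for every $U$. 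This is where the hypothesis on the existence of a smaller convex subpolygon $\rho\subset\be_{k_i}$ containing both $\{\mathcal G_{\tilde s}^{a_s^i,a_\tau^i}(\be_k)\}_{\tilde s\geq 0}$ and $\{\mathcal G_{*\tilde s}^{a_s^i,a_\tau^i}(\be_k)\}_{\tilde s\geq 0}$ is used: by convexity of $\rho$, the whole segment lies in $\rho\subset\be_{k_i}$. Continuity of $\mathcal S:\be_{k_i}\times\mathbf I\to\be_{k_i}$ follows from continuity of each $\theta_n\circ\{\mathcal G_{\tilde s}^{a_s^i,a_\tau^i}\}_{\tilde s\geq 0}$ and $\theta_n\circ\{\mathcal G_{*\tilde s}^{a_s^i,a_\tau^i}\}_{\tilde s\geq 0}$, joint continuity of scalar multiplication in $U$, and finiteness of the sum.

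Finally I would read off the boundary values: $\mathcal S(\be_k,1)=\{\mathcal G_{\tilde s}^{a_s^i,a_\tau^i}(\be_k)\}_{\tilde s\geq 0}$ and $\mathcal S(\be_k,0)=\{\mathcal G_{*\tilde s}^{a_s^i,a_\tau^i}(\be_k)\}_{\tilde s\geq 0}$, so $\mathcal S$ is the required homotopy. There is no genuine obstacle here beyond confirming that the interpolation does not leave $\be_{k_i}$; since the hypothesis supplies the convex subpolygon $\rho$ precisely to secure this, the corollary reduces to invoking the preceding lemma with $\mathbf Y_i=\be_{k_i}$.
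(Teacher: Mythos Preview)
Your proposal is correct and matches the paper's approach: the corollary is stated in the paper without proof, as it is an immediate specialization of the preceding lemma obtained by setting $\mathbf Y_i=\be_{k_i}$. Your write-up simply makes explicit the straight-line homotopy argument already carried out in that lemma, so there is nothing to add or correct.
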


From above Corollary it is clear that, any growth process in the interval $[a_s^i,a_\tau^i]$ inside the finite strategy polygon $\be_{k_i}$ is homotopic. Therefore, without any other further restriction a soccer player can follow any path without hampering their payoff.

\begin{defn}
A collection $k=\{k_{(I\times I')\times t}\}$ which are assumed to be homomorphisms $k_{(I\times I')\times t}:\mathbb C^{(I\times I')\times t}(\be_{k_i})\ra\mathbb C^{(I\times I')\times t}(\hat\be_{k_i})$, one for each dimension $(I\times I')\times t\geq 0$, and such that $\partial_{[(I+1)\times(I'+1)]\times (t+1)}\circ k_{[(I+1)\times(I'+1)]\times (t+1)}=k_{(I\times I')\times t}\circ\ \partial_{[(I+1)\times(I'+1)]\times (t+1)}$ is called a chain transformation or chain map on $\mathbb S$ where, $\hat \be_{k_i}$ is a convex strategy polygon of player $i$ other than $\be_{k_i}$.
\end{defn}

For a finite convex polygon $\be_{k_i}$ we take the chains over $2$-sphere $\mathbb S$ and the chain transformation $k$ of $\mathbb C_*(\be_{k_i};\mathbb S)$ into itself. Then the following relationship of trace is going to hold.

\begin{lem}
(Hopt trace theorem, \cite{granas2003}) Suppose, the dimension of player $i$'s strategy polygon $\be_{k_i}$ has the dimension of $(I\times I')\times t$, and the collection of distribution function which follows
\[
k_*:\mathbb C_*(\be_{k_i},\mathbb S)\ra \mathbb C_*(\be_{k_i},\mathbb S),
\]
by any chain transformation. Then
\[
\sum_{m=0}^{(I\times I')\times t}(-1)^m\text{tr}(k_m)=\sum_{m=0}^{(I\times I')\times t}(-1)^m\text{tr}(k_{m*}),
\]
where $\text{tr}(.)$ represents trace of the argument.
\end{lem}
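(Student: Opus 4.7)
The plan is to decompose each chain group in $\mathbb C_*(\be_{k_i},\mathbb S)$ compatibly with the chain map $k$ and the boundary operator $\partial$, and then to show that everything except the homology contribution telescopes out of the alternating sum. Setting $Z_m=\ker\partial_m$ and $B_m=\mathrm{im}\,\partial_{m+1}$, I would use the freeness of the simplicial chain modules associated to the finite convex polygon $\be_{k_i}$ on $\mathbb S^{(I\times I')\times t}$ to choose direct-sum splittings
\[
\mathbb C_m=B_m\oplus H_m'\oplus D_m,\qquad Z_m=B_m\oplus H_m',
\]
where $D_m$ is a complement of $Z_m$ in $\mathbb C_m$ on which $\partial_m$ restricts to an isomorphism $D_m\to B_{m-1}$, and $H_m'$ is a complement of $B_m$ in $Z_m$, so $H_m'\cong H_m(\be_{k_i};\mathbb S)$. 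Since $k$ is a chain map, $k_m$ preserves $B_m$ and $Z_m$, and the map induced on $H_m'\cong Z_m/B_m$ agrees with the homology-level map $k_{m*}$.

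Relative to these splittings the matrix of $k_m$ is block upper-triangular, so by additivity of the trace on a direct sum,
\[
\mathrm{tr}(k_m)=\mathrm{tr}(k_m|_{B_m})+\mathrm{tr}(k_m|_{H_m'})+\mathrm{tr}\bigl((\pi_{D_m}\circ k_m)|_{D_m}\bigr),
\]
where $\pi_{D_m}$ denotes the projection onto $D_m$ along $Z_m$. The decisive identity is that the chain-map relation $\partial_m\circ k_m=k_{m-1}\circ\partial_m$, together with the fact that $\partial_m\colon D_m\to B_{m-1}$ is an isomorphism, conjugates $(\pi_{D_m}\circ k_m)|_{D_m}$ to $k_{m-1}|_{B_{m-1}}$, so the two endomorphisms have the same trace. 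Substituting into the alternating sum gives
\[
\sum_{m=0}^{(I\times I')\times t}(-1)^m\mathrm{tr}(k_m)=\sum_{m=0}^{(I\times I')\times t}(-1)^m\bigl[\mathrm{tr}(k_m|_{B_m})+\mathrm{tr}(k_m|_{H_m'})+\mathrm{tr}(k_{m-1}|_{B_{m-1}})\bigr].
\]
After reindexing the third term, the $\mathrm{tr}(k_m|_{B_m})$ contributions cancel in pairs, using $B_{-1}=0$ at the bottom and $D_{(I\times I')t+1}=0$ at the top of the complex, leaving exactly $\sum_m(-1)^m\mathrm{tr}(k_m|_{H_m'})=\sum_m(-1)^m\mathrm{tr}(k_{m*})$, which is the claim.

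The step I expect to be the main obstacle is justifying that the chain modules $\mathbb C_m(\be_{k_i},\mathbb S)$ in the $\sqrt{8/3}$-LQG setting admit the required splittings and have finite-dimensional $k_m$-invariant pieces: one needs $\mathbb C_m$ to be free (or at least projective) over the coefficient ring so that the short exact sequences $0\to Z_m\to\mathbb C_m\to B_{m-1}\to 0$ split, and one needs finite generation so that the traces $\mathrm{tr}(k_m)$ and $\mathrm{tr}(k_{m*})$ are well defined. Both properties follow from $\be_{k_i}$ being a finite convex polygon with the explicit vertex set $\{p_i^0,\ldots,p_i^M\}$ and from the top-dimension bound $(I\times I')\times t$ that terminates the telescoping; once these are invoked, the remainder is purely linear-algebraic.
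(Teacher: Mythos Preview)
Your argument is correct and is essentially the classical proof of the Hopf trace theorem via the splitting $\mathbb C_m\cong B_m\oplus H_m'\oplus D_m$ and the telescoping of the boundary contributions. There is nothing to compare against, however: the paper does not supply its own proof of this lemma. It is stated as the Hopf trace theorem with a citation to \cite{granas2003} and then invoked in the proof of Proposition~\ref{fp0}; no argument for it appears in the text.

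One remark on your closing paragraph. Your concern about the coefficient object $\mathbb S$ is well placed, since the paper's notation $\mathbb C_*(\be_{k_i},\mathbb S)$ is ambiguous and a $2$-sphere is not a coefficient ring. In the only place the lemma is actually used (the Lefschetz--Hopf argument), the paper silently passes to rational coefficients $\mathbb Q$, so the freeness and finite-dimensionality you need are automatic there. Your proof goes through verbatim over any field, and over a commutative ring it goes through once the simplicial chain modules are free of finite rank, which is exactly what the finiteness of the vertex set $\{p_i^0,\ldots,p_i^M\}$ guarantees.
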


For a finite convex strategy polygon $\be_{k_i}$ in $\sqrt{8/3}$-LQG consider the map $\hat k:\be_{k_i}\ra\be_{k_i}$.	Using rational points on the surface $\mathbb Q$ as coefficients, then each induced homomorphism $\hat k_*^{(I\times I')\times t}:\mathbb H_{(I\times I')\times t}(\be_{k_i};\mathbb Q)\ra \mathbb H_{(I\times I')\times t}(\be_{k_i};\mathbb Q)$ is an endomorphism, where $\hat k_*$ is the linear transformation of the vector space. Following \cite{granas2003} each $\hat k_*^{(I\times I')\times t}$ has a trace $\text{tr}\left[\hat k_*^{(I\times I')\times t}\right]$.
\begin{defn}
For the strategy polygon of player $i$ with $\dim({\be_{k_i}})\leq(I\times I')\times t$ and the endomorphic map $\hat k:\be_{k_i}\ra\be_{k_i}$, the Lefschetz number $\o(\hat k)$ is 
\[
\o(\hat k)=\sum_{m=0}^{(I\times I')\times t}(-1)^m\text{tr}[\hat k_{m*},\mathbb H_m(\be_{k_i};\mathbb Q)].
\]
\end{defn}

\begin{lem}
\citep{granas2003} For a strategy polygon $\be_{k_i}$ if $\hat k:\be_{k_i}\ra\be_{k_i}$ is continuous then, $\o(\hat k)$ depends on the homotopy class of $\hat k$. Furthermore, the Lefschetz number $\o(\hat k)$ is an integer irrespective of any $\sqrt{8/3}$-LQG field characteristics.
\end{lem}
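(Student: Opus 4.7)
The statement decomposes into two assertions: homotopy invariance of $\o(\hat k)$, and its integrality. The plan is to handle each separately, drawing on the Hopf trace theorem established in the preceding lemma and on the corollary that any two Quantum Loewner Evolution growth processes inside $\be_{k_i}$ with matching endpoints are homotopic.

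For homotopy invariance, I would first appeal to the classical chain-homotopy construction. Given a continuous homotopy $\mathcal H:\be_{k_i}\times \mathbf I \to \be_{k_i}$ between two continuous self-maps $\hat k$ and $\hat k'$, one constructs a prism operator $\mathcal P:\mathbb C_m(\be_{k_i};\mathbb Q)\to \mathbb C_{m+1}(\be_{k_i};\mathbb Q)$ satisfying the chain identity $\partial \mathcal P + \mathcal P \partial = \hat k'_{\#} - \hat k_{\#}$. Descending to homology, this difference vanishes, so $(\hat k)_*^m = (\hat k')_*^m$ in every dimension $m$. Summing signed traces yields $\o(\hat k)=\o(\hat k')$, so $\o$ is constant on homotopy classes of continuous self-maps.

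For integrality, the plan is to invoke the Hopf trace theorem from the preceding lemma. Since $\be_{k_i}$ is a finite convex strategy polygon with vertices $\{p_i^0,\dots,p_i^M\}$, it carries a finite simplicial triangulation, and hence $\mathbb C_*(\be_{k_i};\mathbb S)$ is finitely generated and free as an abelian group with basis indexed by the simplices. By the simplicial approximation theorem, the continuous endomorphism $\hat k$ is homotopic to a simplicial map $\hat k^\Delta$, whose chain-level action $k_m$ in the simplicial basis has only integer matrix entries; consequently $\text{tr}(k_m)\in\mathbb Z$ for each $m$. Combining the homotopy invariance already established with the Hopf trace identity yields
\[
\o(\hat k) \;=\; \sum_{m=0}^{(I\times I')\times t}(-1)^m\, \text{tr}(\hat k_{m*}) \;=\; \sum_{m=0}^{(I\times I')\times t}(-1)^m\, \text{tr}(k_m) \;\in\; \mathbb Z.
\]
This value is manifestly independent of the $\sqrt{8/3}$-LQG conformal structure on $\be_{k_i}$, because rational singular homology and the endomorphisms it induces depend only on the homotopy type of the underlying polygon, not on the GFF-weighted tensor $e^{\sqrt{8/3}k(l)}d\mathbf Z\otimes d\widehat{\mathbf Z}$.

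The main obstacle I anticipate is justifying the simplicial approximation step in the presence of the singular $\sqrt{8/3}$-LQG metric: the field $k$ is only a variant of the GFF and the induced metric admits conical singularities, so the standard smooth-category approximation machinery does not apply verbatim. This is precisely where the preceding corollary on homotopy of growth processes inside $\be_{k_i}$ earns its keep: it guarantees that any two continuous candidate maps with matching endpoints inside the polygon are homotopic, so the simplicial replacement of $\hat k$ remains in its homotopy class, and the integer-valued chain trace transfers back to $\hat k$ as required.
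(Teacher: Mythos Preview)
The paper does not supply its own proof of this lemma: it is simply stated with a citation to \cite{granas2003} and used as a black box. So there is nothing to compare your argument against on the paper's side.

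Your sketch is essentially the standard textbook proof (prism operator for homotopy invariance, simplicial approximation plus Hopf trace for integrality), and it is correct in outline. However, your final paragraph manufactures a difficulty that is not there. Simplicial approximation is a purely topological statement about continuous maps between polyhedra; it requires only the underlying topology of $\be_{k_i}$, not any metric structure, and certainly not a smooth structure. The presence of the $\sqrt{8/3}$-LQG tensor $e^{\sqrt{8/3}k(l)}d\mathbf Z\otimes d\widehat{\mathbf Z}$ is irrelevant to whether a continuous self-map of a finite polyhedron admits a simplicial approximation in its homotopy class. Invoking the preceding corollary on growth processes to rescue this step is unnecessary and somewhat misleading: that corollary concerns homotopies of specific Quantum Loewner Evolution paths, not arbitrary continuous self-maps of $\be_{k_i}$, and would not by itself give you the simplicial replacement you need. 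The clean route is just to observe that $\be_{k_i}$, as a finite convex polygon, is a compact polyhedron regardless of any random field decorating it, and apply the classical simplicial approximation theorem directly.
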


\begin{prop}\label{fp0}
(Lefschetz-Hopf fixed point theorem on $\sqrt{8/3}$-LQG surface) If player $i$ has a finite strategy polygon $\be_{k_i}$ on $\mathbb S$ with the map $\hat k:\be_{k_i}\ra\be_{k_i}$ then, $\hat k$ has a fixed point for all Lefschetz number $\o(\hat k)\neq0$.
\end{prop}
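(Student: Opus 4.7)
The plan is to prove the contrapositive: assuming that $\hat k$ has no fixed point on $\be_{k_i}$, I will show $\o(\hat k) = 0$, contradicting the hypothesis. First, since $\be_{k_i}$ is a finite convex strategy polygon it is compact in $\mathbb S^{(I\times I')\times t}$, and $\hat k$ is continuous because it arises from a Quantum Loewner Evolution (see the preceding Corollary on homotopy of growth processes). Together these give that the map $\be_k \mapsto |\hat k(\be_k)-\be_k|$ attains a strictly positive minimum $\delta > 0$ on $\be_{k_i}$.

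Next, the plan is to refine the given vertex set $\{p_i^0,\dots,p_i^M\}$ of $\be_{k_i}$ by iterated barycentric subdivision until the mesh falls below $\delta/3$; no closed simplex $\tau$ of the refined triangulation can then meet its image $\hat k(\tau)$, for otherwise some point of $\tau$ would be displaced by less than $\delta$. Invoking a simplicial approximation to $\hat k$ on this refinement yields a simplicial self-map $\hat k'$ that is homotopic to $\hat k$, and after at most one further subdivision I can ensure $\hat k'$ still sends each simplex off itself. By the earlier Lemma on homotopy invariance of the Lefschetz number, $\o(\hat k) = \o(\hat k')$. The Hopf trace theorem (the Lemma cited above) then gives
\[
\o(\hat k') = \sum_{m=0}^{(I\times I')\times t}(-1)^m\,\text{tr}(\hat k'_m),
\]
where $\hat k'_m$ is the chain map induced by $\hat k'$ on $\mathbb C_m(\be_{k_i};\mathbb S)$; since $\hat k'$ carries every $m$-simplex to a disjoint $m$-simplex, the matrix of $\hat k'_m$ in the simplicial basis has zero diagonal, each $\text{tr}(\hat k'_m)$ vanishes, and hence $\o(\hat k) = 0$.

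The main obstacle I foresee is legitimising simplicial approximation in the $\sqrt{8/3}$-LQG setting, because the Riemann metric tensor $e^{\sqrt{8/3}k(l)}\,d\mathbf{Z}\otimes d\widehat{\mathbf{Z}}$ is singular wherever $k$ blows up, while the classical argument uses Euclidean barycentric geometry. I plan to handle this by restricting to the smooth part of the quantum space, explicitly excluding neighbourhoods of the essential singularities as the earlier definition of a smooth purely quantum space permits, and by exploiting the barycentric representation $\be_k = \sum_n \theta_n(\be_k)\, p_i^n$ set up above. The preceding Corollary already shows that straight-line interpolation of barycentric coordinates keeps two Quantum Loewner Evolutions homotopic inside a common sub-polygon, which is exactly the ingredient simplicial approximation requires, so on this smooth piece the classical polyhedral argument transfers verbatim and the contrapositive goes through.
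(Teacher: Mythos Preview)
Your proposal is correct and follows essentially the same route as the paper: both argue by contraposition, use compactness of $\be_{k_i}$ to obtain a uniform positive displacement, pass to a sufficiently fine barycentric subdivision, replace $\hat k$ by a simplicial approximation, and then invoke the Hopf trace theorem to conclude that every chain-level trace vanishes because no simplex is carried to itself. The only cosmetic difference is that the paper works directly with the composite chain map $\psi\,\text{sub}^m:\mathbb C_q(\be_{k_i};\mathbb Q)\to\mathbb C_q(\be_{k_i};\mathbb Q)$ and a triangle-inequality estimate $d[\nu,\psi(\nu)]\geq 2\varepsilon/n_i$ to kill the diagonal entries, whereas you first appeal to the homotopy-invariance lemma to replace $\hat k$ by $\hat k'$ and then read off the zero diagonal; your extra paragraph on excising the LQG singularities to justify the barycentric/simplicial machinery is a point the paper leaves implicit.
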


\begin{proof}
We will prove this theorem by contradiction. Let $\hat k$ has no fixed points. As player $i$'s strategy polygon $\be_{k_i}$ compact, for all $\be_k\in\be_{k_i}$ there $\exists\varepsilon>0$ such that the measure $d[\hat k(\be_k),\be_k]\geq\varepsilon$. In this proof a repeated barycentric subdivision of $\be_{k_i}$ will be used with a fixed quadragulation of mesh $<\varepsilon/n_i$, where $n_i\in\mathbb N$.

Let $\be_{k_i}^{(m)}$ is $m^{th}$ barrycentric subdivision of $\be_{k_i}$ such that the mapping $\psi:\be_{k_i}^{(m)}\ra\be_{k_i}$ be a simplicical approximation of $\hat k$. Define $m^{th}$ barrycentric subdivision's map when chain characteristic is present in $\text{sub}^m:\mathbb C_*(\be_{k_i})\ra\mathbb C_*(\be_{k_i}^{(m)})$. It is enough to determine the trace of $\psi\text{sub}^m:\mathbb C_q(\be_{k_i};\mathbb Q)\ra\mathbb C_q(\be_{k_i};\mathbb Q)$ for each oriented $q$-sided smaller finite strategy polygons inside $\be_{k_i}$, where $\mathbb C_q(\be_{k_i};\mathbb Q)$ is the chain characteristic on $q$-sided smaller strategy polygon. To determine trace we would use the Hopf trace theorem explained above.  Suppose, for each $\mathbb C_q(\be_{k_i};\mathbb Q)$ there exists a basis $\{\rho_l^q\}$ of all oriented $q$-sided smaller strategy polygons in $\be_{k_i}$. Expressing $\psi\text{sub}^m$ in terms of the basis function would be,
\[
\text{sub}^m\rho_l^q=\sum\a_{ll'}k_{l'}^q,\ \a_{ll'}=0,\pm 1,\ k_{l'}^q\in\be_{k_i}^{(m)}, \ k_{l'}^q\subset\rho_l^q.
\]
Hence,
\[
\psi\text{sub}^m\rho_l^q=\sum\a_{ll'}\psi(k_{l'}^q)=\sum \gamma_{ll'}\rho_{l'}^q,
\]
where $\gamma_{ll'}$ is another basis. Now suppose, $\nu$ is the vertex of any $k_{l'}^q\subset\rho_l^q$. Define $\nu_*:=\be_{k_i}\setminus\bigcup\{\rho\in\be_{k_i}|\nu\notin\rho\}$. Then for vertex $\nu$ we have $\hat k(\nu)\in\hat k(\nu_*)\subset\psi_*(\nu)$ such that $d[\hat k(\nu),\psi(\nu)]<\varepsilon/n_i$, where $\psi_*:=\be_{k_i}\setminus\{\rho\in\be_{k_i}|\psi\notin\rho\}$. Clearly, 
\[
d[\nu,\psi(\nu)]\geq d[\nu,\hat k(\nu)]-d[\hat k(\nu),\psi(\nu)]\geq 2\varepsilon/n_i,
\]
such that if $\psi(\nu)$ belongs to $\rho_l^q$ then, $\delta(\rho_l^q)\geq 2\varepsilon/n_i$, which is the contradiction of our argument that quadragulation is $<\varepsilon/n_i$. Therefore, $\text{tr}[\psi\text{sub}^m,\mathbb C_q(\be_{k_i};\mathbb Q)]=0$ for each $q$-sided finite smaller strategy space. Finally, by Hopf trace theorem we can say that Lefschetz number $\o(\hat k)=0$. This completes the proof.
\end{proof}	

According to \cite{gwynne2016} it is a continuum analog of the first passage percolation on a random planner map. Suppose, for $\tau>0$ and let $\eta_s^\tau$ be a whole plane of a Schramm-Loewner Evolution with the central charge $6$ ($SLE_6$) from $a_s^i$ to $a_\tau^i$ which is sampled independent with respect to $k$. Now the plane $\eta_s^\tau$ has been run by terminal time interval $\tau$ such that $\forall\ \varepsilon>0$ and determine it by $k$. For $\varepsilon>0$ and $\tilde s\in[s,\tau] $, suppose $\mathcal G_{\tilde s}^{a_s^i,a_\tau^i,\tau}:=\eta_s^\tau([s,\tau\wedge h_s^\tau])$, where $h_s^\tau$ is the first time $\eta_s^\tau$ hits $a_\tau^i$ when the player $i$ starts the process at $a_s^i$ and going towards $a_\tau^i$ \citep{gwynne2016}. Following \cite{miller2015} we know that, for $\varepsilon\downarrow 0$ a growing family of sets $\{\mathcal G_{\tilde s}^{a_s^i,a_\tau^i}\}_{\tilde s\geq 0}$ in the action interval $[a_s^i,a_\tau^i]$ can be found by taking almost sure limits of an appropriate chosen subsequence, which \cite{gwynne2016} calls as Quantum Loewner Evolution defined on $\sqrt{8/3}$-quantum sphere (QLE($8/3,0$)). For $\varepsilon\geq0$, suppose $\mathcal M_{\tilde s}^{a_s^i,a_\tau^i}(\bf Z, \bf W)$ be some length of the boundary of the connected set $\mathbb S\setminus\mathcal G_{\tilde s}^{a_s^i,a_\tau^i}$, such that it contains the terminal action $a_\tau^i$, where $\bf Z$ be all possible the goals, $\bf W$ be all possible control strategies available to player $i$ at time $s$. Furthermore, assume the stopping time due to reach action $a_\tau^i$ is $\sigma_{\mathcal M}^{a_s^i,a_\tau^i}>0$ define a measure $\mathcal M\geq 0$ such that 
\[
\mathcal M({\bf Z},{\bf W})=\int_s^{\sigma_{\mathcal M}^{a_s^i,a_\tau^i}}\frac{1}{\mathcal M_{\tilde s}^{a_s^i,a_\tau^i}({\bf Z}, {\bf W})}d\tilde s.
\]
 Define $\widehat{\mathcal G_{\mathcal M}^{a_s^i,a_\tau^i}}:=\mathcal G_{\sigma_{\mathcal M}^{a_s^i,a_\tau^i}}^{a_s^i,a_\tau^i}$. Then following \cite{gwynne2016} $\sqrt{8/3}$-LQG distance of $[a_s^i,a_\tau^i]$ is defined as 
 \[
 {\bm\omega}_k(a_s^i,a_\tau^i):=\inf\left\{\mathcal M({\bf Z},{\bf W})\geq 0;\ a_\tau^i\in\widehat{\mathcal G_{\mathcal M}^{a_s^i,a_\tau^i}} \right\}.
 \]
 Finally, by \cite{gwynne2016} and \cite{miller2016}, for all $\hat r_i\in\mathbb R$ the construction of this metric for $k+\hat r_i$ yields a scaling property such as 
 \[
 \bm{\omega}_{k+\hat r_i}(a_s^i,a_\tau^i)=e^{\left(\sqrt{8/3}\right)\hat r_i/4}\bm\omega_k(a_s^i,a_\tau^i).
 \]
 
 \subsection{Interpretation of the diffusion part of goal dynamics}
 
 Equation (\ref{f2}) talks about two components of the diffusion part, $\hat{\bm\sigma}$ which comes from the strategies from the opposition team and $\bm{\sigma}^*$ consists of the venue, the percentage of attendance of the home crowd, the type of match (i.e. day or day-night match), the amount of dew on the field and the speed of wind (which gives advantage to those free kickers who like make swings due to wind). 
 
 Firstly, consider the situation where a team is playing abroad. In this case the players have harder time scoring a goal than in their home environment. For example if an Argentine or a Brazilian player plays in an European club tournament it is extremely hard to score in European environment as the playing style is very different than their native land of Latin America. This thing is also true for World Cup where only Brazilian male soccer team is the only team from Latin America is able to win the cup in Europe ($1958$ FIFA World Cup in Sweden). As playing abroad would create extra mental pressure on the players, assume that pressure is a non-negative $C^2$ function $\mathbf{p}(s,\mathbf{Z}):[0,t]\times\mathbb{R}^{(I\times I')\times t}\ra\mathbb{R}_+^{(I\times I')\times t}$ at match $M+1$ such that if $\mathbf{Z}_{s-1}<\E_{s-1}(\mathbf{Z})$ then $\mathbf{p}$ takes a very high positive value. In other words, if actual number of goals at time $s-1$ (i.e., $\mathbf{Z}_{s-1}$) is less than the expected number of goals at that time then, the pressure to score a goal at $s$ is very high.
 
 Secondly, percentage of home crowd in the total crowd of the stadium matters in the sense that, if it is a home match for a team, then players get extra support from their fans and are motivated to score more. If a team has a mega-star, they will access more crowd. Generally, mega stars have fans all over the world, and therefore they might get more crowd from the opposition team than their team mates. Define a positive finite $C^2$ function $\mathbf{A}(u,\mathbf{W}):[0,t]\times\mathbb{R}^{(I\times I')\times t}\ra\mathbb{R}_+^{(I\times I')\times t}$ with $\partial \mathbf{A}/\partial\mathbf{W}>0$ and $\partial \mathbf{A}/\partial s\gtreqless 0$ depends on if at time $s$, player $i$ with valuation $W_i\in\mathbf{W}$ is still playing, or is out of the field due to injury or other reasons.
 
 Thirdly, if the match is a day match, then both teams have comparative advantage in better visibility due to the sun. On the other hand, if the match is a day-night match then a team's objective is to choose the side of the field in such a way that they get a better visibility and get advantage by scoring more goals. Therefore, in this game a team who lose the toss clearly has disadvantage in terms of the position of the field. Furthermore, because of the dew the ball becomes wet and heavier at night and, it would be harder to grip from a goal-keeper's point of view as well as move the ball and score from a striker's point of view. Therefore, if the toss winning team scores some goals in the first half, that team surely has some comparative advantage to win the game. Hence, winning the toss is important. Furthermore, if a team loses the toss, then its decision to score in the either of two halves depends on its opposition. Hence, define a function $\mathfrak{B}(\mathbf{Z})\in\mathbb{R}_+^{(I\times I')\times t}$ such that,
 \begin{align}\label{f3}
 \mathfrak{B}(\mathbf{Z})=\mbox{$\frac{1}{2}$} 
 [\mbox{$\frac{1}{2}$} \E_0(\mathbf{Z}_D^2)+ \mbox{$\frac{1}{2}$}\ \E_0(\mathbf{Z}_{DN}^1)]+
 \mbox{$\frac{1}{2}$} 
 [\mbox{$\frac{1}{2}$} \E_0(\mathbf{Z}_D^1)+\mbox{$\frac{1}{2}$} \E_0(\mathbf{Z}_{DN}^2)],
 \end{align}
 where for $i=1,2$, $\E_0(\mathbf{Z}_D^i)$ is the conditional expectation of goal of a team before the starting of the day match $M+1$ with total number of goals at $i^{th}$ half $\mathbf{Z}_D^i$, and $\E_0(\mathbf{Z}_{DN}^i)$ is the conditional expectation of the goal before starting a day-night match $M+1$. Furthermore, if a team wins the toss, then it will go for the payoff $\mbox{$\frac{1}{2}$} \left[\mbox{$\frac{1}{2}$} \E_0(\mathbf{Z}_D^2)+ \mbox{$\frac{1}{2}$} \E_0(\mathbf{Z}_{DN}^1)\right]$, and the later part of the Equation (\ref{f3}) otherwise.
 
 Finally, we consider the dew point measure and the speed of wind at time $s$ as an important factor in scoring a goal. As these two are natural phenomena and represent ergodic behavior, we assume this can be represented by a Weierstrass function $\mathbf{Z}_e:[0,t]\ra\mathbb{R}$ \citep{falconer2004} defined as,
 \begin{equation}\label{f4}
 \mathbf{Z}_e(s)=\sum_{\a=1}^{\infty}(\lambda_1+\lambda_2)^{(s-2)\a} \sin\left[(\lambda_1+\lambda_2)^\a u\right],
 \end{equation}
 where $s\in(1,2)$ is a penalization constant of weather at over $u$, $\lambda_1$ is the dew point measure defined by the vapor pressure $0.6108*\exp\left\{\frac{17.27T_d}{T_d+237.3}\right\}$, where dew point temperature $T_d$ in defined in Celsius and $\lambda_2$ is the speed of wind such that $(\lambda_1+\lambda_2)>1$. 
 
 \begin{as}\label{asf4}
 	$\bm{\sigma}^*(s,\mathbf{W},\mathbf{Z})$ is a positive, finite part of the diffusion component in Equation (\ref{f1}) which satisfies Assumptions \ref{asf0} and \ref{asf1} and is defined as 
 	\begin{multline}\label{f5}
 	\bm{\sigma}^*(s,\mathbf{W},\mathbf{Z})=\mathbf{p}(s,\mathbf{Z})+\mathbf{A}(s,\mathbf{W})+
 	\mathfrak{B}(\mathbf{Z})+\mathbf{Z}_e(s)\\
 	+\rho_1 \mathbf{p}^T(s,\mathbf{Z})\mathbf{A}(s,\mathbf{W})+
 	\rho_2\mathbf{A}^T(s,\mathbf{W})\mathfrak{B}(\mathbf{Z})+
 	\rho_3\mathfrak{B}^T(\mathbf{Z})\mathbf{p}(s,\mathbf{Z}),
 	\end{multline}
 	where $\rho_j\in(-1,1)$ is the $j^{th}$ correlation coefficient for $j=1,2,3$, and $\mathbf{A}^T,\mathfrak{B}^T$ and $\mathbf{p}^T$ are the transposition of $\mathbf{A},\mathfrak{B}$ and $\mathbf{p}$ which satisfy all conditions with Equations (\ref{f3}) and (\ref{f4}). As the ergodic function $\mathbf{Z}_e$ comes from nature, a team does not have any control on it and its correlation coefficient with other terms in Equation (\ref{f5}) are assumed to be zero.
 \end{as}

The randomness $\hat{\bm{\sigma}}$ of Equation (\ref{f2}) comes from type of skill of a soccer star of  the opposition team. There are mainly two main types of players: dribblers and tacklers, and free kickers. Free kickers  have two components, the speed of the ball after their kick $s\in\mathbb{R}_+^{(I\times I')\times t}$ in miles per hour and the curvature of the bowl path measure by the dispersion from the straight line connecting the goal keeper and the striker measured by $x\in\mathbb{R}_+^{(I\times I')\times t}$ inches. Define a payoff function $A_1(s,x,G):\mathbb{R}_+^{(I\times I')\times t}\times \mathbb{R}_+^{(I\times I')\times t}\times[0,1]\ra \mathbb{R}^{2(I\times I')\times t}$ such that, at time $s$ the expected payoff after guessing a ball right is $\E_s A_1(s,x,G)$, where $G$ is a guess function such that, if a player $i$ guesses the curvature and speed of the ball after an opposition player kicks then $G=1$ and if player $i$ does not then $G=0$, and if player $i$ partially guesses then, $G\in(0,1)$. 

On the other hand, there is a payoff function $A_2$ for a dribbler such that $A_2(s,x,\theta_1,G):\mathbb{R}_+^{(I\times I')\times t}\times \mathbb{R}_+^{(I\times I')\times t}\times[-k\pi,k\pi]\times[0,1]\ra\mathbb{R}^{2(I\times I')\times t} $, where $\theta_1$ is the angle between the beginning and end points when an opposition player start dribbling and end it after player $i$ gets the ball and $k\in\mathbb N$.  The expected payoff to score a goal at time $s$ when the opposition player is a dribbler is $\E_s A_2(s,x,\theta_1,G)$. Finally, an tackler's payoff function is $A_3(s,x,\theta_1,\theta_2,G):\mathbb{R}_+^{(I\times I')\times t}\times \mathbb{R}_+^{(I\times I')\times t}\times[-k\pi,k\pi]\times[-k,k\pi]\times[0,1]\ra\mathbb{R}^{2(I\times I')\times t}$, where $\theta_2$ is the allowable tackle movement in terms of angle when they do either of block, poke, slide tackles at time $s$ as $\E_s A_3(s,x,\theta_1,\theta_2,G)$. If $\theta_2$ is more than $k\pi$, the opposition player gets a foul or a yellow card. As player $i$ does not know who is what type of opposition they are going to face at a certain time during a game, their total expected payoff function at time $s$ is $\mathcal{A}(s,x,\theta_1,\theta_2,G)=\wp_1 \E_s A_1(s,x,G)+\wp_2 \E_s A_2(s,x,\theta_1,G)+\wp_3 \E_s A_3(s,x,\theta_1,\theta_2,G)$, where for $j\in\{1,2,3\}$, $\wp_j$ is the probability of each of a dribbler, a tackler and a free kicker with $\wp_1+\wp_2+\wp_3=1$. Therefore, $\mathcal{A}(s,x,\theta_1,\theta_2,G):\mathbb{R}_+^{(I\times I')\times t}\times \mathbb{R}_+^{(I\times I')\times t}\times[-k\pi,k\pi]\times[-k,k\pi]\times[0,1]\ra\mathbb{R}^{2(I\times I')\times t}$.

\section{Main results}
The components of stochastic differential games under $\sqrt(8/3)$-LQG with a continuum of states with dibbling and the passing function with finite actions are following:
\begin{itemize}
	\item $I$ be a non-empty finite set of players, $\mathcal F_s^{\mathbf Z}$ be the filtration of goal $\mathbf{Z}$ with the sample space $\Omega$.
	\item A finite set of actions at time $s$ for player $i$ such that $a_s^i\in A^i$ for all $i\in I$.
	\item A discount rate $\rho_s^i\in(0,1)$ for player $i\in I$ with the constant weight $\a^i\in\mathbb R$.
	\item The bounded objective function $\mathbf OB_\a^i$ expressed in Equation (\ref{f0}) is Borel measurable. Furthermore, the system has a goal dynamics expressed in the Equation (\ref{f1}).
	\item The game must be on a $\sqrt{8/3}$-LQG surface on $2$-sphere $\mathbb{S}^{(I\times I')\times t}$ with the Riemann metric tensor $e^{\sqrt{8/3}k(l)}d\mathbf {Z}\otimes d\widehat{\mathbf{Z}}$, where $k$ is some variant of GFF such that $k:\mathbb{S}^{(I\times I')\times t}\ra\mathbb R^{I\times I'}$.
	\item For $\varepsilon>0$ there exists a transition function from time $s$ to $s+\varepsilon$ expressed as $\Psi_{s,s+\varepsilon}^i(\mathbf Z):\Omega\times\mathbb{S}\times\mathbb R_+^{I\times I'}\times\prod_i A^i\ra\Delta(\Omega\times \mathbb S\times\mathbb R_+^{I\times I'})$ which is Borel-measurable.
	\item For a fixed quantum knowledge space $(\Omega,\mathcal F_s^{\bf Z},\mathbb S, I,\mathcal E)$, for player $i$ a dribbling and passing function $p^i$ is a mapping $p^i:\Omega\times\mathbb S\ra\Delta(\Omega\times \mathbb S)$ which is $\sigma$-measurable and the equivalence relationship has some measure in $2$-sphere.
\end{itemize}
The game is played in continuous time. If $\mathbf{Z}_{I\times I'}\subset \mathcal Z\subset(\Omega\times \mathbb S)$ be the goal condition after the start of $(M+1)^{th}$ game and player $i$ select an action profile at time $s$ such that $a_s^i\in\prod_i A^i$, then for $\varepsilon>0$, $\Psi_{s,s+\varepsilon}(\mathbf Z,a_s^i)$ is the conditional probability distribution of the next stage of the game. A stable strategy for a soccer player $i$ is a behavioral strategy that depends on the goal condition at time $s$. Therefore, we can say it is Borel measurable mapping associates with each goal $\mathbf Z\subset\Omega$ a probability distribution on the set $A^i$.

\begin{definition}
A stochastic differential game is purely quantum if it has countable orbits on $2$-sphere. In other words,
\begin{itemize}
	\item For each goal condition $\mathbf{Z}_{I\times I'}\subset \mathcal Z\subset(\Omega\times \mathbb S)$, every action profile of player $i$ starts at time $s$, $a_s^i\in A^i$ with the $\sqrt{8/3}$-LQG measure for a very small space on $\mathbb S$ defined as $e^{\sqrt{8/3}k(l)}$, $i^{th}$ player's transition function $\Psi_{s,s+\varepsilon}^i(\mathbf Z)$ is a purely quantum measure. Define
	\[
	\mathcal Q(\mathbf Z):=\left\{\mathbf Z'\in(\Omega\times\mathbb S)\big|\exists a_s^i\in A^i,e^{\sqrt{8/3}k(l)}\neq 0, \Psi_{s,s+\varepsilon}(\mathbf Z'|\mathbf Z,a_s^i)>0 \right\}.
	\]
	\item For each goal condition $\mathbf{Z}_{I\times I'}\subset \mathcal Z\subset(\Omega\times \mathbb S)$, the set
	\[
	\mathcal Q^{-1}(\mathbf Z):=\left\{\mathbf Z'\in(\Omega\times\mathbb S)\big|\exists a_s^i\in A^i,e^{\sqrt{8/3}k(l)}\neq 0, \Psi_{s,s+\varepsilon}(\mathbf Z'|\mathbf Z,a_s^i)>0 \right\}
	\]
	is countable.
\end{itemize}
\end{definition}
For purely atomic games see \cite{hellman2019}.
\begin{prop}\label{fp1}
	A purely quantum game on $\mathbb S$ with the objective function expressed in the Equation (\ref{f0}) subject to the goal dynamics expressed in the Equation (\ref{f1}) in which the orbit equivalence relation is smooth admits a measurable stable equilibrium.
\end{prop}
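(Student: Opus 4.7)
The plan is to construct the equilibrium by combining a fixed-point argument on each orbit of $\mathcal E$ with a measurable-selection procedure enabled by smoothness of the orbit equivalence relation. First I would invoke smoothness: by definition there exists a conformal distribution $k':\mathbb{S}\to\mathbb{S}'$ with $l_1\mathcal E l_2\iff k'(l_1)=k'(l_2)$, so the quotient $\mathbb{S}/\mathcal{E}$ is a standard Borel space and admits a Borel transversal $T\subset\mathbb{S}$ meeting each orbit exactly once. Since the game is purely quantum, each slice $\mathcal{Q}(\mathbf{Z})$ is countable, which turns the strategic problem layered above $T$ into one over a countable discrete structure on a measurable base and rules out pathologies such as uncountable selection issues.

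Second, on each fixed orbit parametrized by $l\in T$, with goal state $\mathbf{Z}$ fixed, I would run a best-response argument player by player. By Assumption \ref{asf3}(ii) each $h_0^i$ is concave on the convex strategy polygon $\be_{k_i}$, and by the homotopy corollary that follows the two homotopy lemmas, any two QLE growth paths within $\be_{k_i}$ are homotopic, so the best-response map $\hat k:\be_{k_i}\to\be_{k_i}$ induced by maximizing the discounted expected objective in Equation (\ref{f0}) subject to the dynamics in Equation (\ref{f1}) is well-defined and continuous. Because $\be_{k_i}$ is a finite convex polygon its rational homology is concentrated in dimension zero, so $\omega(\hat k)=1\neq 0$. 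Proposition \ref{fp0} (Lefschetz-Hopf on $\sqrt{8/3}$-LQG) then yields a fixed point of $\hat k$, which is precisely a stable strategy $W_i^{\ast}(l)$ for player $i$ along that orbit, and applying this to each $i\in I$ gives a pointwise equilibrium $(W_i^{\ast}(l))_{i\in I}$.

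Third, I would paste these pointwise fixed points into a single $\mathcal F_s^{\mathbf Z}$-measurable mapping $l\mapsto (W_i^{\ast}(l))_{i\in I}$ by applying a Kuratowski-Ryll-Nardzewski type measurable selection to the fixed-point correspondence along the transversal $T$. The Borel structure furnished by smoothness of $\mathcal E$ makes the fixed-point set Borel in $T\times\prod_i\be_{k_i}$, and the countability of $\mathcal Q(\mathbf{Z})$ makes the correspondence closed-valued on each section. Stability of the resulting equilibrium then follows from the limiting condition $\lim_{s\downarrow\infty}\E\mu=\mathbf Z^{\ast}$ on the drift together with the linear-growth and Lipschitz bounds in Assumption \ref{asf0}, which force the controlled dynamics to converge to the fixed point independently of small perturbations in the initial condition $h_0^{i\ast}$.

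The hard part will be verifying that the best-response map $\hat k$ is genuinely continuous on the $\sqrt{8/3}$-LQG surface: the Riemann metric $e^{\sqrt{8/3}k(l)}d\mathbf{Z}\otimes d\widehat{\mathbf{Z}}$ is singular at the zero set of the Gaussian free field, and the underlying surface is only Brownian, hence continuous but nowhere differentiable. Continuity of the best-response map has therefore to be read off from the QLE growth process together with the $\sqrt{8/3}$-LQG distance $\bm\omega_k(a_s^i,a_\tau^i)$, rather than from any smooth-derivative argument; this is precisely where the homotopy invariance of the Lefschetz number on $\sqrt{8/3}$-LQG (the lemma immediately preceding Proposition \ref{fp0}) becomes indispensable, since it allows the fixed-point count to be robust to the metric irregularity and to survive the passage from each orbit-wise continuum problem to the global measurable equilibrium.
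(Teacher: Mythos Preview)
Your route is genuinely different from the paper's and, as written, has a real gap. The paper does not run a Lefschetz--Hopf argument orbit-by-orbit at all; instead it follows the Infinitary-logic program set up earlier in the text. Concretely, the paper introduces free variables for beliefs $\eta_{z_3,z_4,p^j}^j(s)$, stage payoffs $\omega_{z_2,a_s,\mathfrak B}^j(s)$ and action weights $\alpha_{z_2,a_s^j,\mathbf Z_e}^j(s)$, and then writes down three quantifier-free formulae: $\tau$ (mixed actions and knowledge-measurability), $\widehat{\mathbf Z}_{s,s+\varepsilon}$ (the value-recursion with the quantum Lagrangian containing the $\lambda_1$ goal-dynamics constraint and the $\lambda_2 e^{\sqrt{8/3}k(l)}$ LQG term), and $\widetilde{\mathbf Z}_{s,s+\varepsilon}$ (the no-profitable-deviation inequality). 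Their conjunction $\widetilde{\mathbf Z}^*_{s,s+\varepsilon}$ expresses ``stationary equilibrium'' as a single $\mathcal L_{\omega_1\omega}$-style sentence over the countable orbit, and the conclusion is then obtained by citing \cite{parthasarathy1972} for countable-state stochastic games together with Theorem~4.7 of \cite{hellman2019}, which is exactly the device that turns smoothness of the orbit equivalence relation into measurability of the selected equilibrium. In other words, the paper encodes the problem and invokes external existence/selection theorems; it never uses Proposition~\ref{fp0} inside this proof.

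The gap in your argument is the step ``applying this to each $i\in I$ gives a pointwise equilibrium $(W_i^\ast(l))_{i\in I}$.'' A fixed point of a single-player map $\hat k:\beta_{k_i}\to\beta_{k_i}$ is not a Nash equilibrium: player $i$'s best response is a map from $\prod_{j\neq i}\beta_{k_j}$ into $\beta_{k_i}$, and separate fixed points for the $i$'s need not be mutually consistent. To make a Lefschetz--Hopf argument work you must pass to the joint best-response on the product polytope $\prod_i\beta_{k_i}$ (which is still contractible, so $\omega(\hat k)=1$), and you must show that this joint map is single-valued and continuous---concavity of $h_0^i$ alone does not guarantee uniqueness of maximizers without strict concavity, so at best you face a correspondence and need a Kakutani-type theorem rather than Lefschetz--Hopf. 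A second omission is the dynamic structure: this is a stochastic game, and the equilibrium condition couples stage payoffs with continuation values through the transition $\Psi_{s,s+\varepsilon}$ (the paper's $\widehat{\mathbf Z}$ and $\widetilde{\mathbf Z}$ carry the term $(1-\theta)\Psi_{s,s+\varepsilon}v_{s+\varepsilon}$ precisely for this reason). Your orbit-by-orbit static fixed point does not solve the intertemporal fixed-point problem for the value functions, which is why the paper leans on \cite{parthasarathy1972}. Your measurable-selection step via a Borel transversal is morally the right idea---it is what smoothness buys, and it is what Hellman--Levy's Theorem~4.7 packages---but it has to be applied to the correct equilibrium correspondence, not to player-by-player fixed points.
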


We know a soccer game stops if rainfall is so heavy that it restricts the vision of the player, making it dangerous or the pitch becomes waterlogged due to the heavy rain. After a period of stoppage, the officials will determine the conditions with tests using the ball while the players wait off the field. Suppose, a match stops after time $\tilde t$ because of the rain. After that there are two possibilities: first, if the rain is heavy, the game will not resume; secondly, if the rain is not so heavy and stops after certain point of time then, after  getting water out of the field, the match might be resumed. Based on the severity of the rain and the equipment used to get the water out from the field, the match resumes for $(\tilde t,t-\varepsilon]$ where $\varepsilon\geq 0$. The importance of $\varepsilon$ is that, if the rain is very heavy,  $\varepsilon=t-\tilde t$ and on the other hand, for the case of very moderate rain, $\varepsilon=0$. Therefore, $\varepsilon\in[0,t-\tilde t]$.

\begin{definition}\label{fde3}
	For a probability space $(\Omega, \mathcal{F}_s^{\mathbf{Z}},\mathcal{P})$ with sample space $\Omega$, filtration at time $s$ of goal condition ${\mathbf{Z}}$ as $\{\mathcal{F}_s^{\mathbf{Z}}\}\subset\mathcal{F}_s$, a probability measure $\mathcal{P}$ and a Brownian motion for rain $\mathbf{B}_s$ with the form $\mathbf{B}_s^{-1}(E)$ such that for $s\in[\tilde t,t-\varepsilon]$, $E\subseteq\mathbb{R}$ is a Borel set. If $\tilde{t}$ is the game stopping time because of rain and ${b}\in \mathbb{R}$ is a measure of rain in millimeters then $\tilde t:=\inf\{s\geq 0|\ \mathbf{B}_s>{b}\}$. 
\end{definition}

\begin{defn}\label{fde4}
	Let $\delta_{\mathfrak s}:[\tilde t,t-\varepsilon]	\ra(0,\infty)$ be a $C^2(s\in[\tilde t,t-\varepsilon])$ time-process of a match such that, it replaces stochastic process by It\^o's Lemma. Then $\delta_{\mathfrak s}$ is a stochastic gauge of that match if  $\tilde t:=\mathfrak s+\delta_{\mathfrak s}$ is a stopping time for each $\mathfrak s\in[\tilde t,t-\varepsilon]$ and $\mathbf B_{\mathfrak s}>b$, where $\mathfrak{s}$ is the new time after resampling the stochastic interval $[\tilde t,t-\varepsilon]$ on $\sqrt{8/3}$-LQG surface.
\end{defn}

\begin{defn}\label{fde5}
	Given a stochastic time interval of the soccer game $\hat I=[\tilde t,t-\varepsilon]\subset\mathbb{R}$, a stochastic tagged partition of that match is a finite set of ordered pairs $\mathcal{D}=\{(\mathfrak s_i,\hat I_i):\ i=1,2,...,p\}$ such that $\hat I_i=[x_{i-1},x_i]\subset[\tilde t,t-\varepsilon]$, $\mathfrak s_i\in \hat I_i$, $\cup_{i=1}^p \hat I_i=[\tilde t,t-\varepsilon]$ and for $i\neq j$ we have $\hat I_i\cap \hat I_j=\{\emptyset\}$. The point $\mathfrak s_i$ is the tag partition of the stochastic time-interval $\hat I_i$ of the game.
\end{defn}

\begin{defn}\label{fde6}
	If $\mathcal{D}=\{(\mathfrak s_i,\hat I_i): i=1,2,...,p\}$ is a tagged partition of stochastic time-interval of the match $\hat I$ and $\delta_{\mathfrak s}$ is a stochastic gauge on $\hat I$, then $\mathcal D$ is a stochastic $\delta$-fine if $\hat I_i\subset \delta_{\mathfrak s}(\mathfrak s_i)$ for all $i=1,2,...,p$, where $\delta(\mathfrak s)=(\mathfrak s-\delta_{\mathfrak s}(\mathfrak s),\mathfrak s+\delta_{\mathfrak s}(\mathfrak s))$.
\end{defn}

For a tagged partition $\mathcal D$ in a stochastic time-interval $\hat I$, as defined in Definitions \ref{fde5} and \ref{fde6}, and a function $\tilde f:[\tilde t,t-\varepsilon]\times \mathbb{R}^{2(I\times I')\times \hat t}\times\Omega\times\mathbb S\ra\mathbb{R}^{(I\times I')\times \hat t}$ the Riemann sum of $\mathcal D$ is defined as 
\[
S(\tilde f,\mathcal D)=(\mathcal D_\delta)\sum \tilde 
f(\mathfrak s,\hat I,\mathbf W,\mathbf Z)=
\sum_{i=1}^p \tilde f(\mathfrak s_i,\hat I_i,\mathbf W,\mathbf Z),
\]
where $\mathcal{D}_\delta$ is a $\delta$-fine division of 
$\mathbb{R}^{(I\times I')\times \widehat U}$ 
with point-cell function 
$\tilde f(\mathfrak s_i,\hat I_i,\mathbf W,\mathbf Z)=
\tilde f(\mathfrak s_i,\mathbf W,\mathbf Z)\ell(\hat I_i)$, 
where $\ell$ is the length of the over interval and 
$\hat{t}=(t-\varepsilon)-\tilde{t}$ 
\citep{kurtz2004,muldowney2012}.

\begin{defn}\label{fde7}
	An integrable function $\tilde f(\mathfrak s,\hat I,\mathbf W,\mathbf Z)$ on $\mathbb{R}^{(I\times I')\times\hat t}$, with integral 
	\[
	\mathbf a=\int_{\tilde t}^{t-\varepsilon}\tilde{f}(\mathfrak s, \hat I,\mathbf W,\mathbf Z)
	\]
	is stochastic Henstock-Kurzweil type integrable on $\hat I$ if, for a given vector $\hat{\bm\epsilon}>0$, there exists a stochastic $\delta$-gauge in $[\tilde t,t-\varepsilon]$ such that for each stochastic $\delta$-fine partition $\mathcal D_\delta$ in $\mathbb R^{(I\times I')\times \hat t}$ we have, 
	\[
	\E_{\mathfrak s}\left\{\left|\mathbf a-(\mathcal D_\delta)\sum \tilde f(\mathfrak s,\hat I,\mathbf W,\mathbf Z)\right|\right\}<\hat{\bm\epsilon},
	\]
	where $\E_{\mathfrak s}$ is the conditional expectation on goal $\mathbf{Z}$ at sample time $\mathfrak s\in[\tilde t,t-\varepsilon]$ of a non-negative function $\tilde f$ after the rain stops.
\end{defn}

\begin{prop}\label{fp2}
	Define 
	\[
	\mathfrak h=\exp\left\{-
	\tilde\epsilon\E_{\mathfrak s}\left[\int_{\mathfrak s}^{\mathfrak s+
		\tilde\epsilon}\tilde f(\mathfrak s,\hat I,\mathbf W,\mathbf Z)\right]\right\}
	\Psi_{\mathfrak s}(\mathbf{Z})d\mathbf Z.
	\]
	If for a small sample time interval $[\mathfrak s,\mathfrak s+\tilde\epsilon]$, 
	\[
	\frac{1}{N_{\mathfrak s}}\int_{\mathbb R^{2(I\times I')\times\widehat t\times I}}\mathfrak h
	\]
	exists for a conditional gauge $\gamma=[\delta,\omega(\delta)]$, then the indefinite integral of $\mathfrak h$, 
	\[
	\mathbf H(\mathbb R^{2(I\times I')\times\hat t\times I})=\frac{1}{N_{\mathfrak s}}\int_{\mathbb R^{2(I\times I')\times\hat t\times I}}\mathfrak h
	\]
	exists as Stieltjes function in $ \mathbf E([\mathfrak s,\mathfrak s+\tilde\epsilon]
	\times \mathbb R^{2(I\times I')\times\hat t}\times \Omega\times\mathbb S\times\mathbb R^I)$ for all $N_{\mathfrak s}>0$.
\end{prop}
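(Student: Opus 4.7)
My plan is to work in the framework of Muldowney's generalised Riemann integration on function spaces and treat $\mathfrak h$ as a non-negative point-cell function on $[\mathfrak s,\mathfrak s+\tilde\epsilon]\times \mathbb R^{2(I\times I')\times \hat t}\times\Omega\times\mathbb S\times\mathbb R^I$. The first task is to verify that $\mathfrak h$ is well defined and integrable. By Assumption~\ref{asf0}, the drift and diffusion of the goal dynamics have linear growth, so the integral $\int_{\mathfrak s}^{\mathfrak s+\tilde\epsilon}\tilde f\,d\tilde s$ lies in $L^1(\mathcal F_{\mathfrak s}^{\mathbf Z})$ and the exponential factor is bounded in $(0,1]$; the transition kernel $\Psi_{\mathfrak s}(\mathbf Z)$ is Borel and admits densities with respect to the reference $\sqrt{8/3}$-LQG measure $e^{\sqrt{8/3}k(l)}$ by Assumption~\ref{asf1}. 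Hence $\mathfrak h$ is non-negative, $\mathcal F_{\mathfrak s}^{\mathbf Z}$-measurable and dominated by $\Psi_{\mathfrak s}$.

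Next I would use the hypothesised existence of the Riemann-sum integral for the conditional gauge $\gamma=[\delta,\omega(\delta)]$ to build the indefinite integral cell by cell. By the stochastic Saks--Henstock lemma \citep{muldowney2012,kurtz2004}, for every $\hat{\bm\epsilon}>0$ any $\gamma$-fine tagged partition $\mathcal D_\delta=\{(\mathfrak s_i,\hat I_i)\}_{i=1}^p$ of $[\mathfrak s,\mathfrak s+\tilde\epsilon]$ can be refined so that, for each cell $E$ in the semi-ring of product rectangles of $\mathbb R^{2(I\times I')\times \hat t\times I}$, the restricted Riemann sum satisfies $\E_{\mathfrak s}|\mathbf H(E)-(\mathcal D_\delta)\sum \mathfrak h\,\mathbf 1_E|<\hat{\bm\epsilon}$, with $\mathbf H(E):=N_{\mathfrak s}^{-1}\int_E\mathfrak h$. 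This yields finite additivity of $\mathbf H$ over disjoint cells, monotonicity from non-negativity of $\mathfrak h$, and uniform smallness of the remainders across all $\gamma$-fine partitions. A monotone-class / Carath\'eodory extension, justified by the conditional dominated convergence theorem available in the Henstock framework under the envelope $\Psi_{\mathfrak s}$, then upgrades $\mathbf H$ to a countably additive, $\sigma$-finite, non-negative set function on the generated $\sigma$-algebra of $\mathbf E([\mathfrak s,\mathfrak s+\tilde\epsilon]\times \mathbb R^{2(I\times I')\times \hat t}\times \Omega\times\mathbb S\times\mathbb R^I)$, which is precisely the defining property of a Stieltjes function. Positivity of the normaliser $N_{\mathfrak s}$ is preserved in each step, so the conclusion holds for every $N_{\mathfrak s}>0$.

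The hardest step will be justifying the passage to countable additivity when the gauge $\gamma$ is \emph{conditional}, so that the associated $\delta$-fine partitions depend on the realisation of $\mathbf Z$ through $\mathfrak s$ and the tagging is itself random. To handle this I would combine two ingredients: the Lipschitz control of $\bm\mu$ and $\bm\sigma$ from Assumption~\ref{asf0}, which forces uniform continuity of $\mathfrak h$ in $\mathbf Z$ on compact sub-polygons $\be_{k_i}$, and the homotopy invariance of the Quantum Loewner Evolution growth maps established in the corollary preceding this proposition, which renders the specific choice of tag inside any fixed strategy polygon essentially inconsequential. Together they yield the uniform integrability of the Riemann remainders needed to interchange the limit $p\to\infty$ with the conditional expectation $\E_{\mathfrak s}$, completing the identification of $\mathbf H$ with a Stieltjes function in $\mathbf E$.
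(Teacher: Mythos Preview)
Your approach differs from the paper's and is substantially more elaborate than necessary. The paper's proof is a direct Cauchy-criterion argument: it fixes the gauge $\gamma=[\delta,\omega(\delta)]$, splits the domain into two disjoint cells $E^a$ and $E^b$, invokes a restriction result (Theorem~3 of \cite{muldowney2012}) to obtain refined gauges $\gamma_a\prec\gamma$ and $\gamma_b\prec\gamma$ conforming to each piece, and then uses a triangle-inequality computation to show that any two Riemann sums over subdivisions of $E^b$ differ by at most $|\mathfrak s-\mathfrak s'|$. This yields Cauchy integrability on each cell and, via one further triangle-inequality step, finite additivity $\mathbf H(M)=\mathbf H(M^1)+\mathbf H(M^2)$ for disjoint cells $M^1,M^2$. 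That finite additivity is all that ``Stieltjes'' means in the Henstock--Muldowney framework the paper is working in.

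By contrast, you are aiming at \emph{countable} additivity on a generated $\sigma$-algebra via a Saks--Henstock step followed by a Carath\'eodory/monotone-class extension, and you identify the passage to countable additivity under a conditional gauge as the crux. This is a misreading of the target: a Stieltjes cell function here is finitely additive, so no extension argument, no dominated-convergence envelope, and no uniform integrability of Riemann remainders is required. Consequently the machinery you marshal --- the linear-growth and Lipschitz bounds from Assumption~\ref{asf0}, the domination by $\Psi_{\mathfrak s}$, and especially the QLE homotopy invariance from the earlier corollary --- plays no role in the paper's argument, and the paper never invokes any of it. Your Saks--Henstock step would already deliver the finite additivity that is actually claimed, so the proof could stop there; the subsequent Carath\'eodory lift and the entire ``hardest step'' paragraph address a problem that is not part of the proposition.
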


\begin{coro}\label{fc0}
	If $\mathfrak h$ is integrable on $\mathbb R^{2(I\times I')\times\hat t\times I}$ as in Proposition \ref{fp2}, then for a given small continuous sample time the interval $[\mathfrak s,\mathfrak s']$ with 
	$\tilde{\epsilon}=\mathfrak s'-\mathfrak s>0$, 
	there exists a $\gamma$-fine division $\mathcal D_\gamma$ in $\mathbb R^{2(I\times I')\times\hat t\times I}$ such that,
	\[
	|(\mathcal D_\gamma)\mathfrak h[\mathfrak s,\hat I,\hat I(\mathbf Z),\mathbf W,\mathbf Z]-\mathbf H(\mathbb R^{2(I\times I')\times\hat t\times I})|\leq\mbox{$\frac{1}{2}$}|\mathfrak s-\mathfrak s'|<\tilde{\epsilon},
	\]
	where $\hat I(\mathbf Z)$ is the interval of goal $\mathbf Z$ in $\mathbb R^{2(I\times I')\times\hat t\times I}$. This integral is a stochastic It\^o-Henstock-Kurtzweil-McShane-Feynman-Liouville type path integral in goal dynamics of a sample time after the beginning of a match after rain interruption.
\end{coro}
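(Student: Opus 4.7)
The plan is to obtain the corollary as a direct consequence of Proposition \ref{fp2} combined with the stochastic Henstock-Kurzweil type integrability criterion of Definition \ref{fde7}. First I would invoke Proposition \ref{fp2} to fix the indefinite integral $\mathbf H(\mathbb R^{2(I\times I')\times\hat t\times I})$ as a well-defined Stieltjes function on the product space $\mathbf E([\mathfrak s,\mathfrak s+\tilde\epsilon]\times \mathbb R^{2(I\times I')\times\hat t}\times \Omega\times\mathbb S\times\mathbb R^I)$, so that approximation by Riemann-type sums of $\mathfrak h$ becomes a meaningful question on the time interval $[\mathfrak s,\mathfrak s']\subset[\mathfrak s,\mathfrak s+\tilde\epsilon]$.

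The key step is a calibrated choice of tolerance. I would apply Definition \ref{fde7} with the specific choice $\hat{\bm\epsilon}=\mbox{$\frac{1}{2}$}|\mathfrak s-\mathfrak s'|$, which is strictly smaller than $\tilde\epsilon=\mathfrak s'-\mathfrak s$. By the stochastic Henstock-Kurzweil integrability of $\mathfrak h$ implied by Proposition \ref{fp2}, there exists a conditional gauge $\gamma=[\delta,\omega(\delta)]$ for which every $\gamma$-fine tagged partition $\mathcal D_\gamma$ on $\mathbb R^{2(I\times I')\times\hat t\times I}$ satisfies
\[
\left|\mathbf H(\mathbb R^{2(I\times I')\times\hat t\times I})-(\mathcal D_\gamma)\mathfrak h[\mathfrak s,\hat I,\hat I(\mathbf Z),\mathbf W,\mathbf Z]\right|\leq \mbox{$\frac{1}{2}$}|\mathfrak s-\mathfrak s'|<\tilde\epsilon.
\]
Combining this bound with the point-cell interpretation of the Riemann sum from Definitions \ref{fde5} and \ref{fde6}, evaluated on the goal interval $\hat I(\mathbf Z)\subset\mathbb R^{2(I\times I')\times\hat t\times I}$, yields exactly the stated inequality.

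The final step is interpretation: the resulting sum is identified as a stochastic It\^o-Henstock-Kurzweil-McShane-Feynman-Liouville type path integral by reading $\mathfrak h$ as the Feynman exponential weight $\exp\{-\tilde\epsilon\E_{\mathfrak s}\int_{\mathfrak s}^{\mathfrak s+\tilde\epsilon}\tilde f\}$ composed with the transition kernel $\Psi_{\mathfrak s}(\mathbf Z)d\mathbf Z$, with $\E_{\mathfrak s}$ absorbing the Brownian randomness driving the diffusion in the goal dynamics (\ref{f1}) on the $\sqrt{8/3}$-LQG surface.

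I expect the main obstacle to be technical bookkeeping rather than any deep estimate: one must verify that a $\gamma$-fine division on the joint product space $\mathbb R^{2(I\times I')\times\hat t\times I}$ is compatible with its marginal restriction to $[\mathfrak s,\mathfrak s']$, so that the tolerance $\mbox{$\frac{1}{2}$}|\mathfrak s-\mathfrak s'|$ emerges cleanly from the length factor $\ell(\hat I_i)$ in the point-cell formula rather than from some combined product measure. Once this Fubini-type compatibility of stochastic gauges (in the sense of Muldowney) is granted, the full chain $\leq \mbox{$\frac{1}{2}$}|\mathfrak s-\mathfrak s'|<\tilde\epsilon$ follows immediately, and the identification with the stochastic path integral requires no further estimate.
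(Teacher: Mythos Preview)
Your proposal is correct and mirrors the paper's approach: the paper does not give a separate proof of the corollary, since the inequality $\left|(\mathcal D_\gamma)\sum \mathfrak h-\mathbf H(\mathbb R^{2(I\times I')\times\hat t\times I})\right|<\mbox{$\frac{1}{2}$}|\mathfrak s-\mathfrak s'|$ is already established inside the proof of Proposition~\ref{fp2} by choosing the gauge $\gamma$ for the tolerance $\mbox{$\frac{1}{2}$}|\mathfrak s-\mathfrak s'|$, exactly as you propose via Definition~\ref{fde7}. Your remark on Fubini-type compatibility of stochastic gauges is a reasonable technical caveat, but the paper treats this as granted from the Muldowney framework and does not elaborate further.
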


As after the rain stops, the environment of the field changes, giving rise to a different probability space $(\Omega,\mathcal F_{\mathfrak{s}}^{\hat{ \mathbf Z}},\mathcal P)$, where $\mathcal F_{\mathfrak{s}}^{\hat{ \mathbf Z}}$ is the new filtration process at time $\mathfrak s$ after rain. The objective function after rain becomes,
\begin{multline}\label{f8}
\overline{\mathbf {OB}}_\a^i:\widehat{\mathbf Z}_\a^i(\mathbf W,\mathfrak s)\\=h_{\tilde t}^{i*}+\max_{W_i\in W} \E_{\tilde t}\left\{\int_{\tilde t}^{t-\varepsilon}\sum_{i=1}^I\sum_{m=1}^M\exp(-\rho_{\mathfrak s}^im)\a^iW_i(\mathfrak s)h_0^i[s,w(\mathfrak s),z(\mathfrak s)]\bigg|\mathcal F_{\mathfrak s}^{\hat{ \mathbf Z}}\right\}d\mathfrak s.
\end{multline}
Furthermore, if the match starts at time $\tilde t$ after the stoppage of the match due to rain then Liouville like action function on goal dynamics after the match starts after the rain is
\begin{multline}\label{f9}
\mathcal{L}_{\tilde{t},t-\varepsilon}(\mathbf{Z})=
\int_{\tilde{t}}^{t-\varepsilon}\E_{\mathfrak{s}} \left\{\sum_{i=1}^{I}\sum_{m=1}^M
\exp(-\rho_{\mathfrak s}^im)\a^iW_i(\mathfrak s)h_0^i[\mathfrak s,w(\mathfrak s),z(\mathfrak s)] \right.\\
+\lambda_1[\Delta\mathbf Z(\mathfrak s,\mathbf W)-\bm\mu[\mathfrak s,\mathbf W(\mathfrak s),\mathbf Z(\mathfrak s,\mathbf W)]d\mathfrak s-\bm\sigma[\mathfrak s,\hat{\bm\sigma},\mathbf W(\mathfrak s),\mathbf Z(\mathfrak s,\mathbf W)]d\mathbf B(\mathfrak s)]\\
\left.\phantom{\int}
+\lambda_2 e^{\sqrt{8/3}k(l(\mathfrak s))}d\mathfrak s
\right\}.
\end{multline}
The stochastic part of the Equation (\ref{f1}) becomes ${\bm{\sigma}}$ as $\hat{\lambda}_1>\lambda_1$. Equation (\ref{f9}) follows Definition \ref{fde7} such that $\mathbf a=\mathcal{L}_{\tilde t, t-\varepsilon}(\mathbf Z)$ and it is integrable according to Corollary \ref{fc0}.

\begin{prop}\label{fp3}
	If a team's objective is to maximize Equation (\ref{f8}) subject to the goal dynamics expressed in the Equation (\ref{f1}) on the $\sqrt{8/3}$-LQG surface, such that Assumptions \ref{asf0}- \ref{asf4} hold with  Propositions \ref{fp0}-\ref{fp2} and Corollary \ref{fc0}, then after a rain stoppage under a continuous sample time, the weight of player $i$ is found by solving 
	\begin{multline*}
	\sum_{i=1}^{I}\sum_{m=1}^M\exp(-\rho_{\mathfrak s}^im)\a^ih_0^i[\mathfrak s,w(\mathfrak s),z(\mathfrak s)]\\
	+g_{\mathbf{Z}}[\mathfrak s,\mathbf{Z}(\mathfrak s,\mathbf W)] 
	\frac{\partial
		\{\bm\mu[\mathfrak s,\mathbf{W}(\mathfrak s),\mathbf{Z}(\mathfrak s,\mathbf W)]\} 
	}{\partial \mathbf{W}}
	\frac{\partial \mathbf{W}}{\partial W_i }\\ 
	+\mbox{$\frac{1}{2}$} 
	\sum_{i=1}^I\sum_{j=1}^I 
	\frac{\partial
		{\bm\sigma}^{ij}[\mathfrak s,\hat{\bm{\sigma}},\mathbf{W}(\mathfrak s),\mathbf{Z}(\mathfrak s,\mathbf W)] }
	{\partial \mathbf{W}}
	\frac{\partial \mathbf{W}}{\partial W_i }
	g_{Z_iZ_j}[\mathfrak s,\mathbf{Z}(\mathfrak s,\mathbf W)]=0,
	\end{multline*}
	with respect to $\a_i$, where the initial condition before the first kick on the soccer ball after the rain stops  is $\mathbf{Z}_{\tilde s}$. Furthermore, when $\a_i=\a_j=\a^*$ for all $i\neq j$ we get a closed form solution of the player weight as 
	\begin{eqnarray*}
	\a^* & = & 
	-\left[\sum_{i=1}^{I}\sum_{m=1}^M\exp(-\rho_{\mathfrak s}^im)\a^ih_0^i[\mathfrak s,w(\mathfrak s),z(\mathfrak s)]\right]^{-1}\\
	& & \times\left[\frac{\partial g[\mathfrak s,\mathbf{Z}(\mathfrak s,\mathbf W)]}{\partial{\mathbf{Z}}}  
	\frac{\partial
		\{\bm{\mu}[\mathfrak s,\mathbf{W}(\mathfrak{s}),\mathbf{Z}(\mathfrak s,\mathbf W)]\} }{\partial \mathbf{W}}
	\frac{\partial \mathbf{W}}{\partial W_i } \right.\\ 
	& & \left.
	+\mbox{$\frac{1}{2}$}
	\sum_{i=1}^I\sum_{j=1}^I
	\frac{\partial {\bm\sigma}^{ij}[\mathfrak s,\hat{\bm{\sigma}},\mathbf{W}(\mathfrak s),
		\mathbf{Z}(\mathfrak s,\mathbf W)]}{\partial \mathbf{W}}
	\frac{\partial \mathbf{W}}{\partial W_i }
	\frac{\partial^2 g[\mathfrak s,\mathbf{Z}(\mathfrak s,\mathbf W)]}{\partial Z_i\partial Z_j} \right],
	\end{eqnarray*}
	where  function $g\left[\mathfrak s,\mathbf{Z}(\mathfrak s,\mathbf W)\right]\in C_0^2\left([\tilde t,t-\varepsilon]\times \mathbb R^{2(I\times I')\times \hat t}\times \mathbb{R}^I\right)$ with $\mathbf{Y}(\mathfrak s)=g\left[\mathfrak s,\mathbf{Z}(\mathfrak s,\mathbf W)\right]$ is a positive, non-decreasing penalization function vanishing at infinity which substitutes for the goal dynamics such that, $\mathbf{Y}(\mathfrak s)$ is an It\^o process.
\end{prop}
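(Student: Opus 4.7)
The plan is to derive an infinitesimal Hamilton--Jacobi--Bellman equation on the $\sqrt{8/3}$-LQG surface via the Feynman path integral of Proposition~\ref{fp2} and Corollary~\ref{fc0}, then take the first-order condition in $W_i$ and solve the resulting linear equation for $\a^i$. I would identify the integrand $\tilde f$ in Corollary~\ref{fc0} with the instantaneous Liouville Lagrangian inside (\ref{f9}), and use the stochastic $\gamma$-fine division on a small sample interval $[\mathfrak s,\mathfrak s+\tilde\epsilon]$ to write the Bellman recursion
\[
\widehat{\mathbf Z}_\a^i(\mathbf W,\mathfrak s)=\max_{W_i\in w}\E_{\mathfrak s}\!\left\{\tilde\epsilon\,\Lambda(\mathfrak s,\mathbf W,\mathbf Z)+g[\mathfrak s+\tilde\epsilon,\mathbf Z(\mathfrak s+\tilde\epsilon,\mathbf W)]\right\}+o(\tilde\epsilon),
\]
where $\Lambda$ denotes the integrand of (\ref{f8}) and $g$ is the $C_0^2$ penalization that replaces the value function, as in the statement.

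Next I would apply It\^o's lemma to $\mathbf Y(\mathfrak s)=g[\mathfrak s,\mathbf Z(\mathfrak s,\mathbf W)]$ under the SDE~(\ref{f1}); the conditional expectation kills the $d\mathbf B$-increment, and dividing by $\tilde\epsilon$ and letting $\tilde\epsilon\downarrow 0$ yields the HJB-type equation
\[
\max_{W_i\in w}\!\left\{\sum_{i,m}e^{-\rho_{\mathfrak s}^i m}\a^iW_i h_0^i+g_{\mathbf Z}\bm\mu+\tfrac{1}{2}\sum_{i,j}g_{Z_iZ_j}\sigma^{ij}\right\}=0,
\]
where arguments are suppressed and the LQG conformal factor $e^{\sqrt{8/3}k(l)}$ is absorbed as a smooth multiplicative weight on the smooth stratum of the equivalence relation. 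Differentiating the braced expression with respect to $W_i$, using the chain rule $\partial\mathbf W/\partial W_i$ on $\bm\mu$ and $\bm\sigma^{ij}$, and setting the derivative to zero produces exactly the stated first-order condition; concavity in Assumption~\ref{asf3}(ii) (equivalent to Slater) together with the lower bound in Assumption~\ref{asf3}(iii) ensures the interior critical point is a maximum.

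For the closed form, imposing $\a_i=\a_j=\a^*$ for all $i\neq j$ makes $\a^*$ enter the running-reward term linearly and absent from the drift and diffusion terms (which depend on $W_i$ only through $\partial\mathbf W/\partial W_i$); isolating $\a^*$ by division then produces the displayed expression, with Assumption~\ref{asf3}(iii) keeping the denominator bounded away from zero so the division is legitimate. The main obstacle is the rigorous passage from the Feynman--Liouville path integral to the It\^o expansion in the presence of the non-smooth LQG conformal factor: since $e^{\sqrt{8/3}k(l)}$ has GFF singularities, I would restrict to the smooth dribbling-and-passing stratum used in the definition of smooth quantum space earlier in the excerpt and invoke the homotopy of QLE growth processes established there to argue that on a small $\gamma$-fine interval the path-integral contribution reduces to the It\^o expansion up to an error of order $O(\tilde\epsilon)$, matching the bound in Corollary~\ref{fc0}.
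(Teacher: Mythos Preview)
Your approach reaches the stated first-order condition, but it takes a genuinely different route from the paper. You proceed via a standard Bellman/HJB argument: write the dynamic-programming recursion on $[\mathfrak s,\mathfrak s+\tilde\epsilon]$, treat $g$ as the continuation value, expand it by It\^o's lemma, take conditional expectation to kill the martingale increment, pass to the limit to obtain an HJB-type equation, and differentiate in $W_i$. The paper instead works through the Feynman path integral explicitly: it introduces the transition function $\Psi_{\mathfrak s}(\mathbf Z)$ from Proposition~\ref{fp2}, identifies the constraint terms in (\ref{f9}) with the penalization $g$ (so $g$ absorbs the Lagrange multipliers $\lambda_1,\lambda_2$ and the LQG factor rather than serving as a value function), expands the action via It\^o, shifts the integration variable by $\xi$ with $\mathbf Z(\mathfrak s)=\mathbf Z(\mathfrak s')+\xi$, performs a Gaussian integration over $\xi$ using the Hessian $\mathbf\Theta$, fixes the normalizer $N_{\mathfrak s}=\sqrt{(2\pi)^{2(I\times I')\times\hat t\times I}/(\tilde\epsilon|\mathbf\Theta|)}$ to obtain a Wick-rotated Schr\"odinger-type equation $\partial_{\mathfrak s}\Psi_{\mathfrak s}=-f\,\Psi_{\mathfrak s}$, and finally uses $\Psi_{\mathfrak s}\neq 0$ to conclude $\partial f/\partial W_i=0$. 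Your route is more elementary and closer to textbook stochastic control; the paper's route actually exercises the path-integral machinery that Propositions~\ref{fp0}--\ref{fp2} and Corollary~\ref{fc0} were set up to justify, and the Gaussian/$\mathbf\Theta$ step is where the Feynman representation does real work rather than being invoked only nominally. One small slip in your write-up: the right-hand side of your HJB should carry $-g_{\mathfrak s}$ rather than $0$, though this is immaterial once you differentiate in $W_i$.
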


\section{Proofs}

\subsection{Proof of Proposition \ref{fp1}}
Consider a stochastic differential game $\left[\Omega\times\mathbb S,k,(\a^i)_{i\in I},(\rho_s^i)_{i\in I}, \left(\Psi_{s,s+\varepsilon}^i\right)_{i\in I},\right.\\\left.(p^i)_{i\in I}\right]$. Let us define the goal space $\mathcal Z$ is a countable set such that all common knowledge equivalence relations can be represented cardinally in $\mathcal Z$. First we define a quantifier free formula whose free variables are beliefs, payoffs of a player, actions based on the goal condition at time $s$ of stochastic differential game on $2$-sphere with goal space $\mathcal Z$ such that at time $s$ for a given goal condition $\mathbf Z_s$ and beliefs, the strategy polygon $\be_{k_i}$ has a Lefschetz-Hopf fixed point on $\sqrt{8/3}$-LQG action space. Assume $z_1,z_2,z_3,z_4$ are indices in $\mathcal Z$, $i,j$ are players in $I$, $a_s^i$ is player $i$'s action at time $s$ in $A^i$, and the action profile at time $s$ defined as $a_s:=\left(a_1^s(p^i), a_2^s(p^i),...,a_\kappa^s(p^i)\right)$ in $\prod_iA^i$, where $p^i$ is the dribbling and passing function.

For $j\in I$, $p^j:\Omega\times\mathbb S\ra\Delta(\Omega\times \mathbb S)$ and $z_3,z_4\in\mathcal Z$ the variable $\eta_{z_3,z_4,p^j}^j(s)$ is player $j$'s belief about the goal condition $z_3$ at time $s+\varepsilon$ while at the goal condition $z_4$ at time $s$. For $j\in I$, $z_2\in\mathcal Z$, $\mathfrak B(\mathbf Z)\in\mathbb R_+^{(I\times I')\times t}$ and $a_s\in\prod_j A^j$, the variable $\omega_{z_2,a_s,\mathfrak B}^j(s)$ is defined as player $j$'s payoff at time $s$ with goal condition $z_2$ given their action profile and the expectation of goals based on whether their team is playing a day match or a day-night match defined by the function $\mathfrak B$ in the Equation (\ref{f3}) in the previous section. Finally, For $j\in I$,  $z_2\in\mathcal Z$, $\mathbf{Z}_e:[0,t]\ra\mathbb{R}$ and $a_s^j\in A^j$, the variable $\a_{z_2,a_s^j,\mathbf{Z}_e}^j(s)$ is the weight of player $j$ puts at time $s$  on their action $a_s^j$ when the goal condition is $z_2$ and at dew condition on the field $\mathbf{Z}_e$ defined in the Equation (\ref{f4}).

For $i\in I$ and for a mixed action $\a_{a_s^i}\in\Delta A^i$ define a function 
\[
\tau^i(\a_{a_s^i})=\left(\sum_{a_s^i\in A^i}\a_{a_s^i}=1\right)\bigwedge_{a_s^i\in A^i}(\a_{a_s^i}\geq 0),
\]
and for $z_1,z_2\in\mathcal Z$ define
\[
\gamma_{z_1,z_2,p^j}^j\left[\left(\eta_{z_3,z_4,p^j}^j(s)\right)_{z_3,z_4,p^j}\right]=\bigwedge_{z_3,p^j}\left[\eta_{z_3,z_1,p^j}^j(s)=\eta_{z_3,z_2,p^j}^j(s)\right],
\]
where the above argument means for a given dribbling and passing function $p^j$ player $j$ has the same belief at both goal condition $z_1$ and $z_2$. Therefore, the function
\begin{align*}
&\tau\left\{\left[\a_{z_2,a_s^i,\mathbf{Z}_e}^i(s)\right],\left[\eta_{z_3,z_4,p^i}^i(s)\right]\right\}\\&=\left\{\bigwedge_i\bigwedge_{z_2}\tau^i\left[\a_{z_2,a_s^i,\mathbf{Z}_e}^i(s)\right]\right\}\bigwedge\left\{\bigwedge_i\bigwedge_{z_1,z_2}\gamma_{z_1,z_2,p^i}^i\left[\left(\eta_{z_3,z_4,p^j}^j(s)\right)_{z_3,z_4,p^j}\right]\right.\\&\hspace{1cm}\left.\ra\bigwedge_{a_s^i}\left[\a_{z_1,a_s^i,\mathbf{Z}_e}^i(s)=\a_{z_2,a_s^i,\mathbf{Z}_e}^i(s)\right]\right\},
\end{align*}
exists iff mixed actions are utilized at every goal condition (i.e. $\bigwedge_i\bigwedge_{z_2}\tau^i[\a_{z_2,a_s^i,\mathbf{Z}_e}^i(s)]$), and strategies are measurable with respect to player $i$'s knowledge of the game. Now for a transition function of player $i$ in the time interval $[s,s+\varepsilon]$ defined as $\Psi_{s,s+\varepsilon}^i(\mathbf Z)$ with the payoff of them at the beginning of time $s$ is $v_s^i\geq 0$. For $k$ on $\mathbb S$ define a state function
\begin{align}\label{f6}
&\widehat{\mathbf Z}\left\{v_s^i,\omega_{z_2,a_s,\mathfrak B}^i(s),\a_{z_2,a_s^i,\mathbf{Z}_e}^i(s),\Psi_{s,s+\varepsilon}^i(\mathbf Z)\right\}\notag\\&=\bigwedge_i\bigwedge_{z_1}\left[v_s^i=\sum_{a_s}\left(\prod_{i\in I}\a_{z_1,a_s^i,\mathbf{Z}_e}^i(s)\right)\left(\omega_{z_2,a_s,\mathfrak B}^i(s)+(1-\theta)\Psi_{s,s+\varepsilon}^i(\mathbf Z) v_{s+\varepsilon}^i\right)\right],
\end{align}
such that for the goal condition $z_1\in\mathcal Z$, the payoffs under mixed action profile of the game is $v_s^i$, where $\theta\in(0,1)$ is a discount factor of this game. Furthermore, if we consider the objective function expressed in the Equation (\ref{f0}) subject to the goal dynamics expressed in the Equation (\ref{f1}) on the $\sqrt{8/3}$-LQG action space, the function defined in Equation (\ref{f6}) in time interval $[s,s+\varepsilon]$ would be
\begin{align*}
&\widehat{\mathbf Z}_{s,s+\varepsilon}\left\{v_s^i,\a_{z_2,a_s^i,\mathbf{Z}_e}^i(s),\Psi_{s,s+\varepsilon}^i(\mathbf Z)\right\}\notag\\&=\bigwedge_i\bigwedge_{z_1}\biggr\{v_s^i=\sum_{a_s}\left(\prod_{i\in I}\a_{z_1,a_s^i,\mathbf{Z}_e}^i(s)\right)\\&\left[\E_s\int_s^{s+\varepsilon}\bigg(\sum_{i=1}^I\sum_{m=1}^M\exp(-\rho_s^im)\a^iW_i(s)h_0^i[s,w(s),z(s)]\right.\\&\left.+\lambda_1[\Delta\mathbf Z(\nu,\mathbf W)-\bm\mu[\nu,\mathbf W(\nu),\mathbf Z(\nu,\mathbf W)]d\nu-\bm\sigma[\nu,\hat{\bm\sigma},\mathbf W(\nu),\mathbf Z(\nu,\mathbf W)]d\mathbf B(\nu)]\right.\\&\hspace{4cm}\left.+\lambda_2e^{\sqrt{8/3}k(l)}d\nu\bigg)\right]\biggr\},
\end{align*}
where the expression inside the bracket $[.]$ is the quantum Lagrangian with two non-negative time independent Lagrangian multipliers $\lambda_1$ and $\lambda_2$ where the second stands for $\sqrt{8/3}$-LQG surface. Define
\begin{align}\label{f7}
&\widetilde{\mathbf Z}\left\{v_s^i,\omega_{z_2,a_s,\mathfrak B}^i(s),\a_{z_2,a_s^i,\mathbf{Z}_e}^i(s),\Psi_{s,s+\varepsilon}^i(\mathbf Z)\right\}\notag\\&=\bigwedge_i\bigwedge_{z_1}\bigwedge_{d\in A^i}\left[v_s^i\geq\sum_{z_2}\sum_{a_s}\left(\prod_{j\neq i}\a_{z_1,a_s^j,\mathbf{Z}_e}^j(s)\right)\right.\notag\\&\left.\times\left(\omega_{z_2,d,a_s,\mathfrak B}^j(s)+(1-\theta)\Psi_{s,s+\varepsilon}^j(\mathbf Z) v_{s+\varepsilon}^j\right)\right],
\end{align}
a goal condition $z_1\in\mathcal Z$ such that with no deviation in the game with continuation payoff $v_{s+\varepsilon}^j$ gets the payoff no more than the payoff at time $s$ or $v_s^i$. Therefore, for time interval $[s,s+\varepsilon]$ the goal condition should be,
\begin{align*}
&\widetilde{\mathbf Z}_{s,s+\varepsilon}\left\{v_s^i,\a_{z_2,a_s^i,\mathbf{Z}_e}^i(s),\Psi_{s,s+\varepsilon}^i(\mathbf Z)\right\}\notag\\&=\bigwedge_i\bigwedge_{z_1}\bigwedge_{d\in A^i}\biggr\{v_s^i\geq\sum_{z_2}\sum_{a_s}\left(\prod_{j\neq i}\a_{z_1,d,a_s^j,\mathbf{Z}_e}^j(s)\right)\\&\left[\E_s\int_s^{s+\varepsilon}\bigg(\sum_{i=1}^I\sum_{m=1}^M\exp(-\rho_s^jm)\a^jW_j(s)h_0^j[s,w(s),z(s)]\right.\\&\left.+\lambda_1[\Delta\mathbf Z(\nu,\mathbf W)-\bm\mu[\nu,\mathbf W(\nu),\mathbf Z(\nu,\mathbf W)]d\nu-\bm\sigma[\nu,\hat{\bm\sigma},\mathbf W(\nu),\mathbf Z(\nu,\mathbf W)]d\mathbf B(\nu)]\right.\\&\hspace{4cm}\left.+\lambda_2e^{\sqrt{8/3}k(l)}d\nu\bigg)\right]\biggr\}.
\end{align*}
Finally, define
\begin{align*}
&\widetilde{\mathbf Z}_{s,s+\varepsilon}^*\left\{v_s^i,\omega_{z_2,a_s,\mathfrak B}^i(s),\a_{z_2,a_s^i,\mathbf{Z}_e}^i(s),\Psi_{s,s+\varepsilon}^i(\mathbf Z)\right\}\\&=\bigwedge_{j\in I}\bigwedge_{z_1\in\mathcal Z}\tau^i\left[\a_{z_2,a_s^i,\mathbf{Z}_e}^i(s)\right]\wedge\widehat{\mathbf Z}_{s,s+\varepsilon}\left\{v_s^i,\a_{z_2,a_s^i,\mathbf{Z}_e}^i(s),\Psi_{s,s+\varepsilon}^i(\mathbf Z)\right\}\\&\hspace{1cm}\wedge \widetilde{\mathbf Z}_{s,s+\varepsilon}\left\{v_s^i,\a_{z_2,a_s^i,\mathbf{Z}_e}^i(s),\Psi_{s,s+\varepsilon}^i(\mathbf Z)\right\},
\end{align*}
such that each player is playing a probability distribution in each goal condition subject to a goal dynamics, dew condition of the field, whether the match is a day or day-night match, and the strategy profiles in goal condition $z_1\in\mathcal Z$ give the right payoff using equilibrium strategies on $\sqrt{8/3}$-LQG action space. By \cite{parthasarathy1972} we know that, a stochastic game with countable states has a stationary equilibrium and by Theorem $4.7$ of \cite{hellman2019} we conclude that, the system admits a stable equilibrium under $2$-sphere. $\square$

\subsection{Proof of Proposition \ref{fp2}}

Define a gauge $\gamma=[\delta,\omega(\delta)]$ for all possible combinations of a $\delta$ gauge in $[\tilde t,t-\varepsilon]\times \mathbb R^{2(I\times I')\times\hat t}\times \Omega$ and $\omega(\delta)$-gauge in $\mathbb R^{2(I\times I')\times\hat t\times I}$ such that it is a cell in $[\tilde t,t-\varepsilon]\times \mathbb R^{2(I\times I')\times\hat t}\times \Omega\times\mathbb S\times\mathbb R^{2(I\times I')\times\hat t\times I}$, where $\omega(\delta):\mathbb{R}^{2(I\times I')\times\hat t\times I}\ra(0,\infty)^{2(I\times I')\times\hat t\times I}$ is at least a $C^1$ function. The reason behind considering $\omega(\delta)$ as a function of $\delta$ is because, after rain stops, if the match proceeds on time $s$ then we can get a corresponding sample time $\mathfrak{s}$ and a player has the opportunity to score a goal. Let $\mathcal D_\gamma$ be a stochastic $\gamma$-fine in cell $\mathbf E$ in $[\tilde t,t-\varepsilon]\times \mathbb R^{2(I\times I')\times\hat t}\times \Omega\times\mathbb S\times\mathbb R^I$. For any $\bm{\varepsilon}>0$ and for a $\delta$-gauge in $[\tilde t,t-\varepsilon]\times \mathbb R^{2(I\times I')\times\hat t}\times \Omega\times\mathbb S$ and $\omega(\delta)$-gauge in $\mathbb R^{2(I\times I')\times\hat t\times I}$ choose a $\gamma$ so that 
\[
\left|\frac{1}{N_{\mathfrak s}}(\mathcal D_\gamma)\sum \mathfrak h-\mathbf H(\mathbb R^{2(I\times I')\times\hat t\times I})\right|<\mbox{$\frac{1}{2}$}|\mathfrak s-\mathfrak s'|,
\]
where $\mathfrak s'=\mathfrak s+\tilde{\epsilon}$. Assume two disjoint sets $E^a$ and $E^b=[\mathfrak s,\mathfrak s+\tilde{\epsilon}]\times \mathbb R^{2(I\times I')\times\hat t}\times \Omega\times\mathbb S\times\mathbb \{R^I\setminus E^a\}$ such that $E^a\cup E^b=E$ . As the domain of $\tilde f$ is a $2$-sphere, Theorem $3$ in \cite{muldowney2012} implies there is a gauge $\gamma_a$ for set $E^a$ and a gauge $\gamma_b$ for set $E^b$ with $\gamma_a\prec\gamma$ and $\gamma_b\prec\gamma$, so that both the gauges conform in their respective sets. For every $\delta$-fine in $[\mathfrak s,\mathfrak s']\times\mathbb{R}^{2(I\times I')\times\hat t}\times\Omega\times\mathbb S$ and a positive 
$\tilde{\epsilon}=|\mathfrak s-\mathfrak s'|$, 
if a $\gamma_a$-fine division $\mathcal D_{\gamma_a}$ is of the set $E^a$ and $\gamma_b$-fine division $\mathcal D_{\gamma_b}$ is of the set $E^b$, then by the restriction axiom we know that $\mathcal D_{\gamma_a}\cup\mathcal D_{\gamma_b}$ is a $\gamma$-fine division of $E$. Furthermore, as $E^a\cap E^b=\emptyset$
\begin{align}
\mbox{$\frac{1}{N_{\mathfrak s}}$}\left(\mathcal D_{\gamma_a}\cup\mathcal D_{\gamma_b}\right)\sum\mathfrak h=\mbox{$\frac{1}{N_{\mathfrak s}}$}\left[(\mathcal D_{\gamma_a})\sum \mathfrak h+(\mathcal D_{\gamma_b})\sum\mathfrak h\right]=\a+\be.\notag
\end{align}
Let us assume that for every $\delta$-fine we can subdivide the set $E^b$ into two disjoint subsets $E_1^b$ and $E_2^b$ with their $\gamma_b$-fine divisions given by $\mathcal D_{\gamma_b}^1$ and $\mathcal D_{\gamma_b}^2$, respectively. Therefore, their Riemann sum can be written as $\be_1=\frac{1}{N_{\mathfrak s}}(\mathcal D_{\gamma_b}^1)\sum\mathfrak h$ and $\be_2=\frac{1}{N_{\mathfrak s}}(\mathcal D_{\gamma_b}^2)\sum\mathfrak h$, respectively. Hence, for a small sample time interval $[\mathfrak s,\mathfrak s']$,
\[
\big|\a+\be_1-\mathbf H(\mathbb R^{2(I\times I')\times\hat t\times I})\big|\leq\mbox{$\frac{1}{2}$}|\mathfrak s-\mathfrak s'|
\]
and
\[
\big|\a+\be_2-\mathbf H(\mathbb R^{2(I\times I')\times\hat t\times I})\big|\leq\mbox{$\frac{1}{2}$}|\mathfrak s-\mathfrak s'|.
\]
Therefore,
\begin{eqnarray}\label{fCauchy}
|\be_1-\be_2| & = & 
\left|\left[\a+\be_1-\mathbf H(\mathbb R^{2(I\times I')\times\hat t\times I})\right]-
\left[\a+\be_2-\mathbf H(\mathbb R^{2(I\times I')\times\hat t\times I})\right]\right|\notag \\
& \leq & \left|\a+\be_1-\mathbf H(\mathbb R^{2(I\times I')\times\hat t\times I})\right|+
\left|\a+\be_2-\mathbf H(\mathbb R^{2(I\times I')\times\hat t\times I})\right| \notag \\
& \leq & |\mathfrak s-\mathfrak s'|.
\end{eqnarray}
Equation (\ref{fCauchy}) implies that the Cauchy integrability of $\mathfrak h$ is satisfied, and 
\[
\mathbf H(\mathbb R^{2(I\times I')\times\hat t\times I})=
\frac{1}{N_{\mathfrak s}}\int_{\mathbb R^{2(I\times I')\times\hat t\times I}}\mathfrak h.
\]
Now consider two disjoint set $M^1$ and $M^2$ in $\mathbb R^{2(I\times I')\times\hat t\times I}$ such that $M=M^1\cup M^2$ with their corresponding integrals $\mathbf H(M^1), \mathbf H(M^2)$, and $\mathbf H(M)$. Suppose $\gamma$-fine divisions of $M^1$ and $M^2$ are given by $\mathcal D_{\gamma_1}$ and $\mathcal D_{\gamma_2}$, respectively, with their Riemann sums for $\mathfrak h$ are $m_1$ and $m_2$. Equation (\ref{fCauchy}) implies, $\big|m_1-\mathbf H(M^1)\big|\leq\big|\mathfrak s-\mathfrak s'\big|$ and $\big|m_2-\mathbf H(M^2)\big|\leq\big|\mathfrak s-\mathfrak s'\big|$. Hence, $\mathcal D_{\gamma_1}\cup\mathcal D_{\gamma_2}$ is a $\gamma$-fine division of $M$. Let $m=m_1+m_2$ then Equation (\ref{fCauchy}) implies $\big|m-\mathbf H(M)\big|\leq |\mathfrak s-\mathfrak s'|$ and
\begin{eqnarray*}
	|[\mathbf H(M^1)+\mathbf H(M^2)]-\mathbf H(M)| & \leq &
	|m-\mathbf{H}(M)|+|m_1-\mathbf H(M^1)|+ \\
	& & |m_2-\mathbf{H}(M^2)| \\
	& \leq & 3|\mathfrak s-\mathfrak s'|.
\end{eqnarray*}
Therefore, $\mathbf H(M)=\mathbf H(M^1)+\mathbf H(M^2)$ and it is Stieljes. $\square$

\subsection{Proof of Proposition \ref{fp3}}

For a positive Lagrangian multipliers $\lambda_1$ and $\lambda_2$, with initial goal condition $\mathbf Z_{\tilde t}$ the goal dynamics are expressed in Equation (\ref{f1}) such that such that Definition \ref{fde7}, Propositions \ref{fp0}-\ref{fp2} and Corollary \ref{fc0} hold. Subdivide $[\tilde t,t-\varepsilon]$ into $n$ equally distanced small time-intervals $[\mathfrak s,\mathfrak s']$ such that $\tilde\epsilon\downarrow 0$, where $\mathfrak s'=\mathfrak s+\tilde\epsilon$.  For any positive $\tilde\epsilon$ and normalizing constant $N_{\mathfrak s}>0$, define a goal transition function as 
\[
\Psi_{\mathfrak s,\mathfrak s+\tilde\epsilon}(\mathbf{Z})=
\frac{1}{N_{\mathfrak s}}
\int_{\mathbb{R}^{2(I\times I')\times\hat t\times I}} \exp\bigg\{-\tilde\epsilon \mathcal{L}_{\mathfrak s,\mathfrak s+\tilde\epsilon}(\mathbf{Z}) \bigg\} \Psi_{\mathfrak s}(\mathbf{Z})d\mathbf{Z},
\]
where $\Psi_{\mathfrak t}(\mathbf{Z})$ is the goal transition function at the beginning of $\mathfrak t$, ${N_{\mathfrak s}}^{-1} d\mathbf{Z}$ is a finite Riemann measure which satisfies Proposition \ref{fp2}, and for $k^{th}$ sample time interval a goal transition function of $[\tilde t,t-\varepsilon]$ is,
\[
\Psi_{\tilde t,t-\tilde\varepsilon}(\mathbf{Z})=
\frac{1}{(N_{\mathfrak s})^n}
\int_{\mathbb{R}^{2(I\times I')\times\hat t\times I\times n}} \exp\bigg\{-\tilde\epsilon \sum_{k=1}^n\mathcal{L}_{\mathfrak s,\mathfrak s+\tilde\epsilon}^k(\mathbf{Z}) \bigg\}\Psi_0(\mathbf{Z}) \prod_{k=1}^n d\mathbf{Z}^k,
\]
with finite measure $\left(N_{\mathfrak s}\right)^{-n}\prod_{k=1}^{n}d\mathbf Z^k$ satisfying Corollary \ref{fc0} with its initial goal transition function after the rain stops as $\Psi_{\tilde t}(\mathbf Z)>0$ for all $n\in\mathbb N$.  Define $\Delta \mathbf{Z}(\nu)=\mathbf{Z}(\nu+d\nu)-\mathbf{Z}(\nu)$, then Fubuni's Theorem for the small interval of time $[\mathfrak s,\mathfrak s']$ with $\tilde\epsilon\downarrow 0$ yields,
\begin{multline*}
\mathcal{L}_{\mathfrak s,\mathfrak s'}(\mathbf{Z})
=\int_{\mathfrak s}^{\mathfrak s'}\E_{\nu} \left\{\sum_{i=1}^{I}\sum_{m=1}^M
\exp(-\rho_{\nu}^im)\a^iW_i(\nu)h_0^i[\nu,w(\nu),z(\nu)] \right.\\
+\lambda_1[\Delta\mathbf Z(\nu,\mathbf W)-\bm\mu[\nu,\mathbf W(\nu),\mathbf Z(\nu,\mathbf W)]d\nu-\bm\sigma[\nu,\hat{\bm\sigma},\mathbf W(\nu),\mathbf Z(\nu,\mathbf W)]d\mathbf B(\nu)]\\
\left.\phantom{\int}
+\lambda_2 e^{\sqrt{8/3}k(l(\nu))}d\nu
\right\}.
\end{multline*}
As we assume the goal dynamics have drift and diffusion parts, $\mathbf{Z} (\nu,\mathbf W)$ is an It\^o process and $\mathbf{W}$ is a Markov control measure of players. Therefore, there exists a smooth function $g[\nu,\mathbf{Z}(\nu,\mathbf W)]\in C^2\left([\tilde t,t-\varepsilon]\times\mathbb R^{2(I\times I')\times\hat t}\times \mathbb{R}^I\right)$ such that $\mathbf{Y}(\nu)=g[\nu,\mathbf{Z}(\nu,\mathbf W)]$ with $\mathbf{Y}(\nu)$ being an It\^o's process. Assume 
\begin{multline*}
g\left[\nu+\Delta\nu,\mathbf Z(\nu,\mathbf W)+\Delta\mathbf Z(\nu,\mathbf W)\right]=\\
\lambda_1[\Delta\mathbf Z(\nu,\mathbf W)-\bm\mu[\nu,\mathbf W(\nu),\mathbf Z(\nu,\mathbf W)]d\nu-\bm\sigma[\nu,\hat{\bm\sigma},\mathbf W(\nu),\mathbf Z(\nu,\mathbf W)]d\mathbf B(\nu)]\\
+\lambda_2 e^{\sqrt{8/3}k(l(\nu))}d\nu.
\end{multline*}
For a very small sample over-interval around $\mathfrak s$ with 
$\tilde\epsilon\downarrow 0$ generalized It\^o's Lemma gives,
\begin{multline*}
\tilde\epsilon\mathcal{L}_{\mathfrak s,\mathfrak s'}(\mathbf{Z})= 
\E_{\mathfrak s}\left\{\sum_{i=1}^{I}\sum_{m=1}^M
\tilde\epsilon\exp(-\rho_{\mathfrak s}^im)\a^iW_i(\mathfrak s)h_0^i[\mathfrak s,w(\mathfrak s),z(\mathfrak s)] \right.\\ +
\tilde\epsilon g[\mathfrak s,\mathbf{Z}(\mathfrak s,\mathbf W)]
+\tilde\epsilon g_{\mathfrak s}[\mathfrak s,\mathbf{Z}(\mathfrak s,\mathbf W)]+
\tilde\epsilon g_{\mathbf{Z}}[\mathfrak s,\mathbf{Z}(\mathfrak s,\mathbf W)] 
\{\bm{\mu}[\mathfrak s,\mathbf{W}(\mathfrak s),\mathbf{Z}(\mathfrak s,\mathbf W)]\} \\ 
+ \tilde\epsilon g_{\mathbf{Z}}[\mathfrak s,\mathbf{Z}(\mathfrak s,\mathbf W)]
{\bm{\sigma}}\left[\mathfrak s,\hat{\bm{\sigma}},
\mathbf{W}(\mathfrak s),\mathbf{Z}(\mathfrak s,\mathbf W)\right]
\Delta\mathbf{B}(\mathfrak s) \\ 
\left.\phantom{\sum}
+\mbox{$\frac{1}{2}$}\tilde\epsilon\sum_{i=1}^I\sum_{j=1}^I
{\bm\sigma}^{ij}
[\mathfrak s,\hat{\bm{\sigma}},\mathbf{W}(\mathfrak s),\mathbf{Z}(\mathfrak s,\mathbf W)]
g_{Z_iZ_j}[\mathfrak s,\mathbf{Z}(\mathfrak s,\mathbf W)]+o(\tilde\epsilon)\right\},
\end{multline*}
where ${\bm\sigma}^{ij}\left[\mathfrak s,\hat{\bm{\sigma}},\mathbf{W}(\mathfrak s),\mathbf{Z}(\mathfrak s,\mathbf W)\right]$ represents $\{i,j\}^{th}$ component of the variance-covariance matrix, $g_{\mathfrak s}=\partial g/\partial\mathfrak s$, 
$g_{\mathbf{Z}}=\partial g/\partial \mathbf{Z}$, 
$g_{Z_iZ_j}=\partial^2 g/(\partial Z_i\partial Z_j)$, 
$\Delta B_i\Delta B_j=\delta^{ij}\tilde\epsilon$, 
$\Delta B_i\tilde\epsilon=\tilde\epsilon\Delta B_i=0$, and 
$\Delta Z_i(\mathfrak s)\Delta Z_j(\mathfrak s)=\tilde\epsilon$, 
where $\delta^{ij}$ is the Kronecker delta function. 
As $\E_{\mathfrak s}[\Delta \mathbf{B}(\mathfrak s)]=0$, $\E_{\mathfrak s}[o(\tilde\epsilon)]/\tilde\epsilon\ra 0$ and for $\tilde\epsilon\downarrow 0$,
\begin{multline*}
\mathcal{L}_{\mathfrak s,\mathfrak s'}(\mathbf{Z})=
\sum_{i=1}^{I}\sum_{m=1}^M
\exp(-\rho_{\mathfrak s}^im)\a^iW_i(\mathfrak s)h_0^i[\mathfrak s,w(\mathfrak s),z(\mathfrak s)] \\ +
 g[\mathfrak s,\mathbf{Z}(\mathfrak s,\mathbf W)]
+ g_{\mathfrak s}[\mathfrak s,\mathbf{Z}(\mathfrak s,\mathbf W)]+
 g_{\mathbf{Z}}[\mathfrak s,\mathbf{Z}(\mathfrak s,\mathbf W)] 
\{\bm{\mu}[\mathfrak s,\mathbf{W}(\mathfrak s),\mathbf{Z}(\mathfrak s,\mathbf W)]\} \\ 
+  g_{\mathbf{Z}}[\mathfrak s,\mathbf{Z}(\mathfrak s,\mathbf W)]
{\bm{\sigma}}\left[\mathfrak s,\hat{\bm{\sigma}},
\mathbf{W}(\mathfrak s),\mathbf{Z}(\mathfrak s,\mathbf W)\right]
\Delta\mathbf{B}(\mathfrak s) \\ 
\phantom{\sum}
+\mbox{$\frac{1}{2}$}\sum_{i=1}^I\sum_{j=1}^I
{\bm\sigma}^{ij}
[\mathfrak s,\hat{\bm{\sigma}},\mathbf{W}(\mathfrak s),\mathbf{Z}(\mathfrak s,\mathbf W)]
g_{Z_iZ_j}[\mathfrak s,\mathbf{Z}(\mathfrak s,\mathbf W)]+o(1).
\end{multline*}
There exists a vector  $\mathbf{\xi}_{(2(I\times I')\times\hat t\times I)\times 1}$  so that 
\[
\mathbf{Z}(\mathfrak s,\mathbf W)_{(2(I\times I')\times\hat t\times I)\times 1}=\mathbf{Z}(\mathfrak s',\mathbf W)_{(2(I\times I')\times\hat t\times I)\times 1}+\xi_{(2(I\times I')\times\hat t\times I)\times 1}. 
\]
Assume $|\xi|\leq\eta\tilde\epsilon [\mathbf{Z}^T(\mathfrak s,\mathbf W)]^{-1}$, then
\begin{multline*}
\Psi_{\mathfrak s}(\mathbf{Z})+
\tilde\epsilon\frac{\partial \Psi_{\mathfrak s}(\mathbf{Z})}{\partial \mathfrak s}+o(\tilde\epsilon)=
\frac{1}{N_{\mathfrak{s}}} 
\int_{\mathbb{R}^{2(I\times I')\times \hat t\times I}}
\left[\Psi_{\mathfrak s}(\mathbf{Z})+
\xi\frac{\partial \Psi_{\mathfrak u}(\mathbf{Z})}{\partial \mathbf{Z}}+
o(\tilde\epsilon)\right] \\\times
\exp\left\{-\tilde\epsilon\left[\sum_{i=1}^{I}\sum_{m=1}^M\exp(-\rho_{\mathfrak s}^im)\a^iW_i(\mathfrak s)h_0^i[\mathfrak s,w(\mathfrak s),z(\mathfrak s)+\xi]\right.\right. \\ 
+g[\mathfrak s,\mathbf{Z}(\mathfrak s',\mathbf W)+\xi]+ 
g_{\mathfrak s}[\mathfrak s,\mathbf{Z}(\mathfrak s',\mathbf W)+\xi] \\ 
+g_{\mathbf{Z}}[\mathfrak s,\mathbf{Z}(\mathfrak s',\mathbf W)+\xi]
\{\bm{\mu}[\mathfrak s,\mathbf{W}(\mathfrak s),\mathbf{Z}(\mathfrak s',\mathbf W)+\xi] \}\\ 
\left.\left.
+\mbox{$\frac{1}{2}$}\sum_{i=1}^I\sum_{j=1}^I
{\bm\sigma}^{ij}[\mathfrak s,\hat{\bm{\sigma}},\mathbf{W}(\mathfrak s),
\mathbf{Z}(\mathfrak s',\mathbf W)+\xi]
g_{Z_iZ_j}[\mathfrak s,\mathbf{Z}(\mathfrak s',\mathbf W)+\xi]\right]\right\}d\xi+
o(\tilde\epsilon^{1/2}).
\end{multline*}
Define a $C^2$ function 
\begin{eqnarray*}
	f[\mathfrak s,\mathbf W(\mathfrak s),\xi]
	& = & \sum_{i=1}^{I}\sum_{m=1}^M\exp(-\rho_{\mathfrak s}^im)\a^iW_i(\mathfrak s)h_0^i[\mathfrak s,w(\mathfrak s),z(\mathfrak s)+\xi] \\
	& & +g[\mathfrak s,\mathbf{Z}(\mathfrak s',\mathbf W)+\xi]+ 
	g_{\mathfrak s}[\mathfrak s,\mathbf{Z}(\mathfrak s',\mathbf W)+\xi] \\
	& & +g_{\mathbf{Z}}[\mathfrak s,\mathbf{Z}(\mathfrak s',\mathbf W)+\xi]
	\{\bm{\mu}[\mathfrak s,\mathbf{W}(\mathfrak s),\mathbf{Z}(\mathfrak s',\mathbf W)+\xi] \\ 
	& & +\mbox{$\frac{1}{2}$}\sum_{i=1}^I\sum_{j=1}^I
	{\bm\sigma}^{ij}[\mathfrak s,\hat{\bm{\sigma}},\mathbf{W}(\mathfrak s),\mathbf{Z}
	(\mathfrak s',\mathbf W)+\xi] \\ 
	& & \times g_{Z_iZ_j}[\mathfrak s,\mathbf{Z}(\mathfrak s',\mathbf W)+\xi].
\end{eqnarray*} 
Hence,
\begin{eqnarray*}
	\Psi_{\mathbf{s}}(\mathbf{Z})+
	\tilde\epsilon\frac{\partial \Psi_{\mathfrak s}(\mathbf{Z})}{\partial \mathfrak s} & = & 
	\frac{\Psi_{\mathfrak s}(\mathbf{Z})}{N_{\mathfrak s}} 
	\int_{\mathbb{R}^{2(I\times I')\times \hat t\times I}} 
	\exp\{-\tilde\epsilon f[\mathfrak s,\mathbf{W}(\mathfrak s),\xi]\}d\xi+ \\
	& &
	\hspace{-1cm}
	\frac{1}{N_{\mathfrak s}}\frac{\partial \Psi_{\mathfrak s}(\mathbf{Z})}{\partial \mathbf{Z}}
	\int_{\mathbb{R}^{2(I\times I')\times\hat{t}\times I}} 
	\xi\exp\{-\tilde\epsilon f[\mathfrak{s},\mathbf{W}(\mathfrak{s}),\xi]\}d\xi+ 
	o(\tilde\epsilon^{1/2}).
\end{eqnarray*}
For $\tilde\epsilon\downarrow0$, $\Delta \mathbf{Z}\downarrow0$
\begin{eqnarray*}
	f[\mathfrak s,\mathbf{W}(\mathfrak s),\xi] & = & 
	f[\mathfrak s,\mathbf{W}(\mathfrak s),\mathbf{Z}(\mathfrak s',\mathbf W)]\\
	& &+ \sum_{i=1}^{I}f_{Z_i}[\mathfrak s,\mathbf{W}(\mathfrak s),\mathbf{Z}(\mathfrak s,\mathbf W)][\xi_i-Z_i(\mathfrak s,\mathbf W)]\\
	& & 
	\hspace{-2.5cm}
	+\mbox{$\frac{1}{2}$}\sum_{i=1}^{I}\sum_{j=1}^{I}
	f_{Z_iZ_j}[\mathfrak s,\mathbf{W}(\mathfrak s),\mathbf{Z}(\mathfrak s',\mathbf W)]
	[\xi_i-Z_i(\mathfrak s',\mathbf W)][\xi_j-Z_j(\mathfrak s',\mathbf W)]+o(\tilde\epsilon).
\end{eqnarray*}
There exists a symmetric, positive definite and non-singular Hessian matrix $\mathbf{\Theta}_{[2(I\times I')\times\hat t\times I]\times [2(I\times I')\times\hat t\times I]}$ and a vector $\mathbf{R}_{(2(I\times I')\times\hat t\times I)\times 1}$ such that,
\begin{multline*}
\int_{\mathbb{R}^{2(I\times I')\times\hat t\times I}}
\exp\{-\tilde\epsilon f[\mathfrak s,\mathbf{W}(\mathfrak s),\xi]\}d\xi=\\
\sqrt{\frac{(2\pi)^{2(I\times I')\times\hat t\times I}}{\tilde\epsilon |\mathbf{\Theta}|}}
\exp\left\{-\tilde\epsilon f[\mathfrak s,\mathbf{W}(\mathfrak s),\mathbf{Z}(\mathfrak s',\mathbf W)]
+\mbox{$\frac{1}{2}$}\tilde\epsilon\mathbf{R}^T\mathbf{\Theta}^{-1}\mathbf{R}\right\},
\end{multline*}
where
\begin{multline*}
\int_{\mathbb{R}^{2(I\times I')\times\hat t\times I}} \xi 
\exp\{-\tilde\epsilon f[\mathfrak s,\mathbf{W}(\mathfrak s),\xi]\} d\xi \\
=\sqrt{\frac{(2\pi)^{2(I\times I')\times\hat t\times I}}{\tilde\epsilon |\mathbf{\Theta}|}} 
\exp\{-\tilde\epsilon f[\mathfrak s,\mathbf{W}(\mathfrak s),\mathbf{Z}(\mathfrak s',\mathbf W)]+
\mbox{$\frac{1}{2}$}\tilde\epsilon \mathbf{R}^T
\mathbf{\Theta}^{-1}\mathbf{R}\}\\\times
[\mathbf{Z}(\mathfrak s',\mathbf W)+\mbox{$\frac{1}{2}$} (\mathbf{\Theta}^{-1}\mathbf{R})].
\end{multline*}
Therefore
\begin{multline*}
\Psi_{\mathfrak s}(\mathbf{Z})+\tilde\epsilon 
\frac{\partial \Psi_{\mathfrak s}(\mathbf{Z})}{\partial \mathfrak{s}}=
\frac{1}{N_{\mathfrak s}} 
\sqrt{\frac{(2\pi)^{2(I\times I')\times\hat t\times I}}{\tilde\epsilon |\mathbf{\Theta}|}} \\\times
\exp\{-\tilde\epsilon f[\mathfrak s,\mathbf{W}(\mathfrak s),\mathbf{Z}(\mathfrak s',\mathbf W)]+
\mbox{$\frac{1}{2}$}\tilde\epsilon\mathbf{R}^T\mathbf{\Theta}^{-1}\mathbf{R}\}\\
\times\left\{\Psi_{\mathfrak s}(\mathbf{Z})+
[\mathbf{Z}(\mathfrak s',\mathbf W)+\mbox{$\frac{1}{2}$}(\mathbf{\Theta}^{-1} \mathbf{R})]
\frac{\partial \Psi_{\mathfrak{s}}(\mathbf{Z})}{\partial \mathbf{Z}}
\right\}+o(\tilde\epsilon^{1/2}).
\end{multline*}
Assuming $N_s=\sqrt{(2\pi)^{2(I\times I')\times\hat t\times I}/\left(\epsilon |\mathbf{\Theta}|\right)}>0$,
we get Wick rotated Schr\"odinger type equation as,
\begin{multline*}
\Psi_{\mathfrak s}(\mathbf{Z})+
\tilde\epsilon\frac{\partial \Psi_{\mathfrak s}(\mathbf{Z})}{\partial\mathfrak s} 
=\left\{1-\tilde\epsilon f[\mathfrak s,\mathbf{W}(s),\mathbf{Z}(\mathfrak s',\mathbf W)]+
\mbox{$\frac{1}{2}$}\tilde\epsilon\mathbf{R}^T\mathbf{\Theta}^{-1}\mathbf{R}\right\} \\\times
\left[\Psi_{\mathfrak s}(\mathbf{Z})+
[\mathbf{Z}(\mathfrak s',\mathbf W)+\mbox{$\frac{1}{2}$}(\mathbf{\Theta}^{-1} \mathbf{R})]
\frac{\partial \Psi_{\mathfrak{s}}(\mathbf{Z})}{\partial \mathbf{Z}}
\right]+o(\tilde\epsilon^{1/2}).
\end{multline*}
As $\mathbf{Z}(\mathfrak s,\mathbf W)\leq\eta\tilde\epsilon|\xi^T|^{-1}$, there exists $|\mathbf{\Theta}^{-1}\mathbf{R}|\leq 2 \eta\tilde\epsilon|1-\xi^T|^{-1}$ such that for 
$\tilde\epsilon\downarrow 0$ we have $\big|\mathbf{Z}(\mathfrak s',\mathbf W)+\mbox{$\frac{1}{2}$}\left(\mathbf{\Theta}^{-1}\ \mathbf{R}\right)\big|\leq\eta\tilde\epsilon$ and hence,
\[
\frac{\partial \Psi_{\mathfrak s}(\mathbf{Z})}{\partial \mathfrak s}=
\big[- f[\mathfrak s,\mathbf{W}(\mathfrak s),\mathbf{Z}(\mathfrak s',\mathbf W)]+
\mbox{$\frac{1}{2}$}\mathbf{R}^T\mathbf{\Theta}^{-1}\mathbf{R}\big]\Psi_{\mathfrak s}(\mathbf{Z}). 
\]
For $|\mathbf{\Theta}^{-1}\mathbf{R}|\leq 2 \eta\tilde\epsilon|1-\xi^T|^{-1}$ 
and at $\tilde\epsilon\downarrow 0$,
\[
\frac{\partial \Psi_{\mathfrak s}(\mathbf{Z})}{\partial \mathfrak s}=
-f[\mathfrak s,\mathbf{W}(\mathfrak s),\mathbf{Z}(\mathfrak s',\mathbf W)]
\Psi_{\mathfrak s}(\mathbf{Z}),
\]
and
\begin{equation}\label{f10}
-\frac{\partial }{\partial W_i}
f[\mathfrak s,\mathbf{W}(\mathfrak s),\mathbf{Z}(\mathfrak s',\mathbf W)] 
\Psi_{\mathfrak s}(\mathbf{Z})=0.
\end{equation}
In Equation (\ref{f10}),
$\Psi_{\mathfrak s}(\mathbf{Z})$ is the transition wave function and cannot be zero therefore,
\[
\frac{\partial }{\partial W_i}
f[\mathfrak s,\mathbf{W}(\mathfrak s),\mathbf{Z}(\mathfrak s',\mathbf W)]=0.
\]
We know, $\mathbf{Z}(\mathfrak s',\mathbf W)=\mathbf{Z}(\mathfrak s,\mathbf W)-\xi$ and for $\xi\downarrow 0$ as we are looking for some stable solution. Hence, $\mathbf{Z}(\mathfrak s',\mathbf W)$ can be replaced by $\mathbf{Z}(\mathfrak s,\mathbf W)$ and,
\begin{multline*}
f[\mathfrak s,\mathbf W(\mathfrak s),\mathbf Z(\mathfrak s,\mathbf W)]=
\sum_{i=1}^{I}\sum_{m=1}^M\exp(-\rho_{\mathfrak s}^im)\a^iW_i(\mathfrak s)h_0^i[\mathfrak s,w(\mathfrak s),z(\mathfrak s)] \\
+g[\mathfrak s,\mathbf{Z}(\mathfrak s,\mathbf W)]+ 
g_{\mathfrak s}[\mathfrak s,\mathbf{Z}(\mathfrak s,\mathbf W)] \\
+g_{\mathbf{Z}}[\mathfrak s,\mathbf{Z}(\mathfrak s,\mathbf W)]
\{\bm{\mu}[\mathfrak s,\mathbf{W}(\mathfrak s),\mathbf{Z}(\mathfrak s,\mathbf W)]\} \\
+\mbox{$\frac{1}{2}$}
\sum_{i=1}^I\sum_{j=1}^I{\bm\sigma}^{ij}
[\mathfrak s,\hat{\bm{\sigma}},\mathbf{W}(\mathfrak s),\mathbf{Z}(\mathfrak s,\mathbf W)]\ g_{Z_iZ_j}[\mathfrak s,\mathbf{Z}(\mathfrak s,\mathbf W)],
\end{multline*}
so that
\begin{multline}\label{f11}
\sum_{i=1}^{I}\sum_{m=1}^M\exp(-\rho_{\mathfrak s}^im)\a^ih_0^i[\mathfrak s,w(\mathfrak s),z(\mathfrak s)]\\
+g_{\mathbf{Z}}[\mathfrak s,\mathbf{Z}(\mathfrak s,\mathbf W)] 
\frac{\partial
	\{\bm\mu[\mathfrak s,\mathbf{W}(\mathfrak s),\mathbf{Z}(\mathfrak s,\mathbf W)]\} 
}{\partial \mathbf{W}}
\frac{\partial \mathbf{W}}{\partial W_i }\\ 
+\mbox{$\frac{1}{2}$} 
\sum_{i=1}^I\sum_{j=1}^I 
\frac{\partial
	{\bm\sigma}^{ij}[\mathfrak s,\hat{\bm{\sigma}},\mathbf{W}(\mathfrak s),\mathbf{Z}(\mathfrak s,\mathbf W)] }
{\partial \mathbf{W}}
\frac{\partial \mathbf{W}}{\partial W_i }
g_{Z_iZ_j}[\mathfrak s,\mathbf{Z}(\mathfrak s,\mathbf W)]=0.
\end{multline}
If in Equation (\ref{f11}) we solve for $\a_i$, we can get a solution of the weight attached to $W_i(\mathfrak s)$. In order to get a closed form solution we have to assume $\a_i=\a_j=\a^*$ for all $i\neq j$ which yields,
\begin{eqnarray}\label{f12}
\a^* & = & 
-\left[\sum_{i=1}^{I}\sum_{m=1}^M\exp(-\rho_{\mathfrak s}^im)\a^ih_0^i[\mathfrak s,w(\mathfrak s),z(\mathfrak s)]\right]^{-1}
\notag \\
& & \times\left[\frac{\partial g[\mathfrak s,\mathbf{Z}(\mathfrak s,\mathbf W)]}{\partial{\mathbf{Z}}}  
\frac{\partial
	\{\bm{\mu}[\mathfrak s,\mathbf{W}(\mathfrak{s}),\mathbf{Z}(\mathfrak s,\mathbf W)]\} }{\partial \mathbf{W}}
\frac{\partial \mathbf{W}}{\partial W_i } \right. \notag \\ 
& & \left.
+\mbox{$\frac{1}{2}$}
\sum_{i=1}^I\sum_{j=1}^I
\frac{\partial {\bm\sigma}^{ij}[\mathfrak s,\hat{\bm{\sigma}},\mathbf{W}(\mathfrak s),
	\mathbf{Z}(\mathfrak s,\mathbf W)]}{\partial \mathbf{W}}
\frac{\partial \mathbf{W}}{\partial W_i }
\frac{\partial^2 g[\mathfrak s,\mathbf{Z}(\mathfrak s,\mathbf W)]}{\partial Z_i\partial Z_j} \right].
\end{eqnarray}
Expression in Equation (\ref{f12}) is a unique closed form solution. 
The sign of $\a^*$ varies along with the signs of all the partial derivatives. $\square$

\section{Discussion}
In this paper we obtain a weight $\a^*$ for a soccer match with rain interruption. This coefficient tells us how to select a player to score goals at a certain position based on the condition of that match. It is a common practice to hide a new talented player until the $15$-minutes of the game. As the player is new opposition team has less information about them and as they are playing at the first time of the game, they have more energy than a player who is playing for last $75$ minutes. Our model will determine the weight associated with these players under more generalized and realistic conditions of the game.

We use a Feynman path integral technique to calculate $\a^*$. In the later part of the paper we focus on the more volatile environment after the rain stops. We assume that, after a rain stoppage the occurrence of each kick towards the goal strictly depends on the amount of rain at that sample time. If it is more than $b\in\mathbb R$ millimeters, then it is very hard to move with the ball on the field, and it is extremely difficult for a goal keeper to grip the ball which results not to resume the match again. Using It\^ o's lemma we define a $\delta_{\mathfrak s}$-gauge which generates a sample time $\mathfrak s$ instead of an actual time $s$ is assumed to follow a Wiener process. Furthermore, we assume the action space of a soccer player has a $\sqrt{8/3}$-LQG surface and, we construct a stochastic It\^o-Henstock-Kurzweil-McShane-Feynman-Liouville type path integral to solve for the optimal weight associated with them. As before rain, environment does not offer an extra moisture, a technically sound player does not need to predict the behavior of an opponent which is not true for the case of a match after a rain stoppage.

\bibliography{bib}
\end{document}